\tikzset{cross/.style={path picture={
  \draw
    (path picture bounding box.south east)--(path picture bounding box.north west)
    (path picture bounding box.south west)--(path picture bounding box.north east);}}}
\newcommand{\Sasha}[1]{{\color{magenta} \sf $\heartsuit$ [#1]}}
\DeclareMathOperator{\supp}{supp}
\newtheorem{theorem}{Theorem}[section]
\newtheorem{lemma}[theorem]{Lemma}
\newtheorem{definition}[theorem]{Definition}
\newtheorem{corollary}[theorem]{Corollary}
\newtheorem{claim}[theorem]{Claim}
\newtheorem*{theorem*}{Theorem}
\newtheorem{remark}[theorem]{Remark}
\theoremstyle{definition}
\newtheorem{example}[theorem]{Example}
\newtheorem{ex}[theorem]{Example}
\title{A machine learning approach to commutative algebra: Distinguishing table vs non-table ideals}
\author{Laia Amor\'os, Oleksandra Gasanova and Laura Jakobsson}
\begin{document}

\maketitle

\begin{abstract}
    We propose a novel approach to distinguish table vs non-table ideals by using different machine learning algorithms. We introduce the reader to table ideals, assuming some knowledge on commutative algebra and describe their main properties. We create a data set containing table and non-table ideals, and we use a feedforward neural network model, a decision tree and a graph neural networks for the classification.
	Our results indicate that there exists an algorithm to distinguish table ideals from no-table ideals, and we prove it along some novel results on table ideals.

\end{abstract}

\section{Introduction}

Machine learning (ML) algorithms have been successfully applied in many different areas during the past decade, leaving applications to pure mathematics mathematics as one of the last unexplored areas from this point of view. We know that mathematics can be used to improve machine learning techniques, but what about the other direction? Can machine learning be used to answer deep mathematical questions or learn new patterns from mathematical objects?

In this paper we try to answer these questions with a very specific problem, namely learning to distinguish table versus non-table ideals. 
Ideals show up in many different context. We are interested in monomial ideals that live in some polynomial ring $k[x_1,\ldots,x_n]$, where $k$ denotes a field.
A \textit{table} ideal is an ideal that come from a table, i.e. from certain arrangement of integers that satisfy some conditions \ref{def-table}.
Table ideals we first introduced in \cite{table} in order to generalise the results in \cite{miro2020weak}.
They are interesting because quotients by table ideals possess the Strong Lefschetz Property (SLP) and have a symmetric Hilbert series.
Given a table, an ideal can be easily computed out of it. However, given a monomial ideal is not easy to tell if this can be derived from a table or not.

We are interested in using supervised machine learning (ML) methods to learn mathematical patterns. 
Since the problems we are interested in fall on the discrete side, we will explore classification methods in ML.
Machine learning has been previously used in some areas of mathematics with very interesting results, the most popular area being mathematical physics.
In \cite{DL_landscape} and \cite{ML_string}, the author proposes a paradigm to study areas from mathematical physics such as Calabi-Yau manifolds and vector bundles or quiver representations for gauge theories using deep learning tools.
In \cite{ML_Lie} the authors show that essential information about classical and exceptional Lie algebras can be machine learned.

Only recently the use of ML to other mathematical areas has been explored. For example, in
\cite{ML_alg} they initiate a study to determine whether artificial intelligence (AI) can learn algebraic structures. They focus on finite groups and finite rings.
In \cite{ML_math} the author reviews different applications of supervised learning on labeled data from fields like geometry, representation theory, combinatorics or number theory.
Machine learning is also leading to interesting results in number theory. For example, in 
\cite{ML_NT} the authors take a ML approach to the Birch and Swinnerton-Dyer conjecture.
In \cite{ML_NF} the authors show that basic ML algorithms can learn to predict certain invariants of algebraic number fields with high success rates.
In \cite{ML_satotate} they explore the Sato-Tate conjecture, and they train a Bayesan classifier that can distinguish between Sato-Tate groups given a small number of their $L$-function factors.
In \cite{ML_arithmetic-curves} the authors show that standard ML algorithms can be trained to predict certain invariants of elliptic curves and genus two arithmetic curves.
An approach to approximate algebraic properties associated to lattice simplices is done in \cite{ML_lattices}.
Finally, also some applications of ML to algebraic geometry have been developed in
\cite{ML_dessins}, where they use it to study \textit{dessins d'enfants}.

In this paper 
we introduce the reader to table ideals and pose the problem of distinguishing table versus non-table ideals as a machine learning problem. We then try different classification methods: feedforward neural networks, decision trees and graph neural networks. 
In order to train the different ML models, we need to generate a large set of table ideals, as well as non-table ideals.
It is important to highlight our ML approach. The results we obtained with the three methods pointed to the existence of an algorithm to completely classify table from non-table ideals, because all the methods had an almost perfect success rate.
Moreover, by generating table and non-table ideas new insights from their inner structure can be learnt.
Our approach can be used in other mathematical problems similar in nature.

This paper is organised as follows.
In Section \ref{sec:tables} we define table ideals and discuss their properties, define so-called generalised tables in a normal form and show the bijection between them and table ideals.
In Section \ref{sec:data} we explain how we can represent monomial ideals in a way that can be used to train our ML models, and we generate our data set of table and non-table ideals.
In section \ref{sec:ML1} we consider a feedforward neural network to classify our monomial ideals. 
In \ref{sec:ML2} we consider a decision tree model, and in \ref{sec:ML3} we use a graph neural network. For this we need to adapt the shape of our monomial ideals and express them as graphs. This method gives us more flexibility as we do not need to limit the number of generators in the ideals, as opposed with the other two methods.
Finally in Section \ref{sec:alg} we give an algorithm to completely classify table and non-table ideals.

\section{Table ideals}
\label{sec:tables}
Table ideals are monomial ideals that come from tables, that is, matrices of integers that satisfy some conditions on the entries.
The story behind this class of ideals is the following. In \cite{guerrieri2019lefschetz}, the author described Gorenstein codimension three algebras associated to the Ap\'ery sets of numerical semigroups.  Later, in \cite{miro2020weak}, the authors explored the Lefschetz properties of such algebras via passing to the initial ideal. This initial ideal had $3$ variables and $1$ linear condition on the parameters involved (for details, see \Cref{miro-roig}). In \cite{table}, table ideals were introduced in order to generalise the results in \cite{miro2020weak} to more variables and more linear conditions (colours). It turns out that quotients by table ideals have a symmetric Hilbert series and possess the Strong Lefschetz Property (SLP), or, equivalently, they are modules over the Lie algebra $\mathfrak{sl_2}$.




Given a table, the ideal coming from it in a non-minimal form is easy to compute. 
However, the inverse problem is unsolved: given a monomial ideal, we do not know whether it is a table ideal or not. 
The question is made harder by the fact that a table ideal can often have the minimal generating set much smaller than the generating set given by the definition.  \Cref{sec:mathbgtable} covers the necessary mathematical background and results on table ideals, and in \Cref{sec:new_results} we present tables of a specific type -- tables in a normal form. In \Cref{sec:simplices} we associate a weighted simplicial complex to a table ideal and see how the properties of table ideals translate to those of the corresponding simplicial complexes. Finally, in \Cref{sec:uniqueness} we prove the bijection between table ideals and generalised tables in a normal form.

\subsection{Mathematical background on table ideals}
\label{sec:mathbgtable}

In this section we introduce a class of ideals which we call \textit{table ideals}.

\begin{definition}\label{def-table}
An $(s,n)$-\emph{table}, where $0\le s<n$, is an $(s+1)\times n$ matrix of non-negative integers as in \Cref{cond} such that:
\begin{enumerate}
    \item[$(i)$] $\alpha_{i,j}=0$ for all $i>j$,
    \item[$(ii)$] $\sum_{i=1}^{s}{\alpha_{i,j}}\le d_j$ for all $j$,
    \item[$(iii)$] $d_k=\sum_{i=1}^{k-1}\alpha_{i,k}+\sum_{j=k+1}^n\alpha_{k,j}+\alpha_{k+1,k+1}$ for $1 \le k \le s$, where we set $\alpha_{s+1,s+1}=0$.
\end{enumerate}

We will sometimes refer to an $(s,n)$-table just as a table.
\end{definition}

\begin{figure}[H]
\centering
\begin{tikzpicture}[scale=1.4]

      \draw [red,thick]     (2,-1) -- (9,-1);
      \draw [red,thick]    (2,-1) -- (2,-2);
      \draw [red,thick]  (2,-1)--(1,0);
      
      \draw[green, thick]  (2,-1)-- (3,-2) -- (9,-2);
      \draw[green, thick]  (3,-2) -- (3,-3);
      \draw[green,thick] (2,-1)--(2,0);

      \draw[cyan, thick]  (3,-1)-- (3,-2) -- (4,-3)--(9,-3);
      \draw[cyan, thick]  (4,-4)-- (4,-3);
      \draw[cyan, thick] (3,-1)--(3,0);

      \draw[yellow,thick]  (4,-1)-- (4,-3) --(5,-4)-- (9,-4);
      \draw[yellow,thick]  (5,-4)-- (5,-5);
      \draw[yellow,thick]  (4,-1)-- (4,0);
      \draw[blue,thick]  (5,-1)--(5,-4) --(6,-5)-- (9,-5);
      \draw[blue,thick]  (5,-1)--(5,-0);

    \foreach \x in {1,...,2}{
    \node [above, thin] at (\x,0) {$d_\x$};
    \node [above, thin] at (\x,-1) {$\alpha_{1,\x}$};
    }
    \node [above, thin] at (1,-2) {$0$};
    \node [above, thin] at (2,-2) {$\alpha_{2,2}$};
    
    \node [above, thin] at (4,-1) {$\alpha_{1,s-1}$};
    \node [above, thin] at (5,-1) {$\alpha_{1,s}$};
    \node [above, thin] at (6,-1) {$\alpha_{1,s+1}$};
    \node [above, thin] at (8,-1) {$\alpha_{1,n-1}$};
    \node [above, thin] at (9,-1) {$\alpha_{1,n}$};
    
    \node [above, thin] at (4,-2) {$\alpha_{2,s-1}$};
    \node [above, thin] at (5,-2) {$\alpha_{2,s}$};
    \node [above, thin] at (6,-2) {$\alpha_{2,s+1}$};
    \node [above, thin] at (8,-2) {$\alpha_{2,n-1}$};
    \node [above, thin] at (9,-2) {$\alpha_{2,n}$};

    \node [above, thin] at (4,0) {$d_{s-1}$};    
    \node [above, thin] at (5,0) {$d_{s}$};  
    \node [above, thin] at (6,0) {$d_{s+1}$};  
    \node [above, thin] at (8,0) {$d_{n-1}$};
    \node [above, thin] at (9,0) {$d_{n}$};

    \node [above, thin] at (1,-4) {$0$};
    \node [above, thin] at (2,-4) {$0$};
    
    \node [above, thin] at (4,-4) {$\alpha_{s-1,s-1}$};
    
    \node [above, thin] at (5,-4) {$\alpha_{s-1,s}$};
    
    \node [above, thin] at (6,-4) {$\alpha_{s-1,s+1}$};
    \node [above, thin] at (8,-4) {$\alpha_{s-1,n-1}$};
    \node [above, thin] at (9,-4) {$\alpha_{s-1,n}$};

    \node [above, thin] at (1,-5) {$0$};
    \node [above, thin] at (2,-5) {$0$};
    \node [above, thin] at (4,-5) {$0$};
    \node [above, thin] at (5,-5) {$\alpha_{s,s}$};
    \node [above, thin] at (6,-5) {$\alpha_{s,s+1}$};
    \node [above, thin] at (8,-5) {$\alpha_{s,n-1}$};
    \node [above, thin] at (9,-5) {$\alpha_{s,n}$};
    
    \foreach \x in {1,...,9}{
    \node [above, thin] at (\x,-3) {$\ldots$};
    }
    \foreach \y in {-5,...,0}{
     \node [above, thin] at (3,\y) {$\ldots$};
     \node [above, thin] at (7,\y) {$\ldots$};
    }

    \foreach \x in {1,...,2}{
    \foreach \y in {-2,...,0}{
    \fill[fill=black] (\x,\y) circle (0.03 cm);
    }}
    
    \foreach \x in {1,...,2}{
    \foreach \y in {-5,...,-4}{
    \fill[fill=black] (\x,\y) circle (0.03 cm);
    }}
    
    \foreach \x in {4,...,6}{
    \foreach \y in {-2,...,0}{
    \fill[fill=black] (\x,\y) circle (0.03 cm);
    }}
    \foreach \x in {4,...,6}{
    \foreach \y in {-5,...,-4}{
    \fill[fill=black] (\x,\y) circle (0.03 cm);
    }}
    
    \foreach \x in {8,...,9}{
    \foreach \y in {-2,...,0}{
    \fill[fill=black] (\x,\y) circle (0.03 cm);
    }}
    \foreach \x in {8,...,9}{
    \foreach \y in {-5,...,-4}{
    \fill[fill=black] (\x,\y) circle (0.03 cm);
    }}

\end{tikzpicture}
\caption{The $\alpha_{i,j}$'s connected by edges of the same color sum to the corresponding $d_k$.}
\label{cond}
\end{figure}
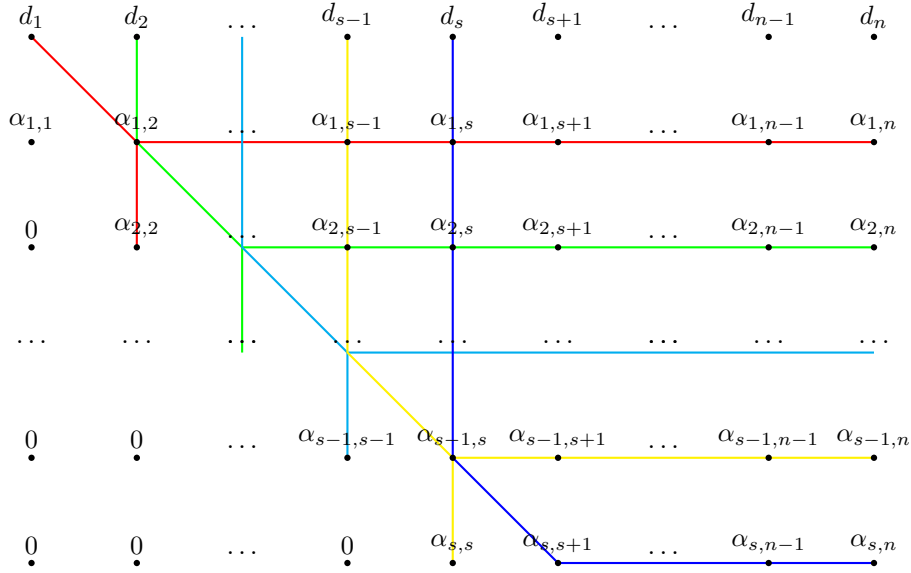

Columns $1,\ldots, s$ are called \emph{constrained} columns and all the other columns are called \emph{unconstrained}. Sometimes we will refer to $d_j$ as $\alpha_{0,j}$. 

To each $(n,s)$-table $T$ we associate a monomial ideal in the polynomial ring $\mathbb{K}[x_1,\ldots, x_n]$, where $\mathbb{K}$ denotes any field: 
$$K(T):=K_0(T)+K_1(T)+ \cdots +K_s(T),$$
where 
\begin{align*}
& K_0(T):=(x_1^{d_1},x_2^{d_2},\ldots,x_n^{d_n})=:(m_{0,1},\ldots, m_{0,n}), \\
& K_1(T):=x_1^{d_1-\alpha_{1,1}}(x_2^{d_2-\alpha_{1,2}},\ldots,x_n^{d_n-\alpha_{1,n}})=:(m_{1,2},\ldots, m_{1,n}), \\
& K_2(T):=x_1^{d_1-\alpha_{1,1}}x_2^{d_2-\alpha_{1,2}-\alpha_{2,2}}(x_3^{d_3-\alpha_{1,3}-\alpha_{2,3}},\ldots,x_n^{d_n-\alpha_{1,n}-\alpha_{2,n}})=:(m_{2,3},\ldots, m_{2,n}), \\
& \vdots \\
& K_s(T):=x_1^{d_1-\alpha_{1,1}}\cdots x_s^{d_s-\alpha_{1,s}- \cdots -\alpha_{s,s}}(x_{s+1}^{d_{s+1}-\alpha_{1,s+1}- \cdots -\alpha_{s,s+1}},\ldots,x_n^{d_n-\alpha_{1,n}- \cdots -\alpha_{s,n}})=:(m_{s,s+1},\ldots, m_{s,n}).
\end{align*}

\begin{remark}
Given a table $T$, there is a bijection between $\alpha_{i,j}$ with $0\le i\le s$, $i<j\le n$ and the generators $m_{i,j}$ of its corresponding ideal $K(T)$. Generally,
$$m_{i,j}=x_1^{d_1-\alpha_{1,1}}\cdots x_i^{d_i-\alpha_{1,i}- \cdots -\alpha_{i,i}}(x_{j}^{d_{j}-\alpha_{1,j}-\cdots -\alpha_{i,j}})=\prod_{t\in\{1,\ldots,i\}\cup\{j\}}{x_t^{d_t-\alpha_{1,t}-\cdots-\alpha_{i,t}}}.$$ 
Note that these generators need not be minimal. Such $\alpha_{i,j}$ will be called the \emph{generating} alphas. All the $\alpha_{i,i}$ will be called the \emph{diagonal} alphas. These two sets are disjoint, and all other entries of a table are zeroes.

\end{remark}

\begin{ex}($1$ condition, $3$ variables)
\label{3vars}
Consider the table $T$ in \Cref{fig:31}. Here we have $K_0(T)=(x_1^4,x_2^3,x_3^3)$, $K_1(T)=x_1(x_2,x_3)$. Thus $K(T)=K_0(T)+K_1(T)=(x_1^4,x_2^3,x_3^3,x_1x_2,x_1x_3)$.

\begin{figure}[H]
\centering
\begin{tikzpicture}[scale=2]
\draw[red,thick]  (2,-1)--(3,-1);
\draw[red,thick] (2,-1)--(1,0);  

\node [above, thin] at (1,0) {$4$};
\node [above, thin] at (2,0) {$3$};
\node [above, thin] at (3,0) {$3$};
\node [above, thin] at (1,-1) {$3$};
\node [above, thin] at (2,-1) {$2$};
\node [above, thin] at (3,-1) {$2$};
\foreach \x in {1,...,3}{
    \foreach \y in {-1,...,0}{
    \fill[fill=black] (\x,\y) circle (0.03 cm);
    }}

\end{tikzpicture}
\caption{A $(1,3)$-table: $1$ condition, $3$ variables} 
\label{fig:31}
\end{figure}
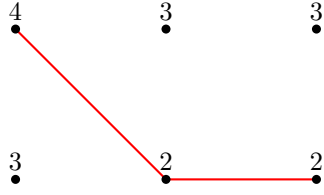
\end{ex}

\begin{ex}
\label{miro-roig}

As mentioned in the beginning of this section, in \cite{guerrieri2019lefschetz}, the author described Gorenstein codimension three algebras associated to the Ap\'ery sets of numerical semigroups. Later, in \cite{miro2020weak}, the authors explored the Lefschetz properties of a subclass of such algebras via passing to the initial ideal. This initial ideal is of the form $\mathbb{K}[x,y,z]/K$, where

$$K=(x^a,y^b,z^c, x^{a-b+\gamma}y^{b-\beta},y^{b-\beta}z^{c-\gamma}),$$
where numbers $a,b,c,\beta,\gamma$ depend on the semigroup and satisfy $1\le \beta \le b-1$, $\max\{{1,b-a+1}\}\le \gamma\le\min\{{b-1,c-1}\}$ and some other conditions. Let $\alpha:=b-\gamma$. Then we can rewrite 
$$K=(x^a,y^b,z^c, x^{a-\alpha}y^{b-\beta},y^{b-\beta}z^{c-\gamma}).$$ This is a table ideal coming from the table below:

\begin{figure}[H]
\centering
\begin{tikzpicture}[scale=2]
\draw[red,thick]  (2,-1)--(3,-1);
\draw[red,thick] (2,-1)--(1,0);

\node [above, thin] at (1,0) {$b$};
\node [above, thin] at (2,0) {$c$};
\node [above, thin] at (3,0) {$a$};
\node [above, thin] at (1,-1) {$\beta$};
\node [above, thin] at (2,-1) {$\gamma$};
\node [above, thin] at (3,-1) {$\alpha$};

\node [above, thin] at (1,0.5) {$y$};
\node [above, thin] at (2,0.5) {$z$};
\node [above, thin] at (3,0.5) {$x$};

\foreach \x in {1,...,3}{
    \foreach \y in {-1,...,0}{
    \fill[fill=black] (\x,\y) circle (0.03 cm);
    }}
  
\end{tikzpicture}
\caption{}
\end{figure}

Note that $1\le \beta \le b-1$ in particular implies $0\le \beta \le b$; $\max\{{1,b-a+1}\}\le \gamma\le\min\{{b-1,c-1}\}$ in particular implies $1\le \gamma \le c-1$, which implies $0\le \gamma \le c$. Finally, $\max\{{1,b-a+1}\}\le \gamma\le\min\{{b-1,c-1}\}$ in particular implies  $b-a+1\le \gamma\le b-1$, or, equivalently, $-a+1\le \gamma-b\le -1$, that is, $1\le \alpha\le a-1$, which in particular implies $0\le \alpha \le a$. Together with $\alpha+\gamma=b$, all of the above implies that $K$ is a table ideal. Also note that the columns are labelled to indicate the non-default order of variables.
\end{ex}




  
\begin{ex}
\label{difftables}
Consider the three tables given in Figure \ref{table_fig4}. For the first table $T_1$ we get $K_0(T_1)=(x_1^{12},x_2^7,x_3^5,x_4^4)$, $K_1(T_1)=x_1^9(x_2^3,x_3^2,x_4^2)$, $K_2(T_1)=x_1^9x_2^0(x_3^0,x_4^1)=(x_1^9)$.
Therefore, $K(T_1)=K_1(T_1)+K_2(T_1)+K_3(T_1)=(x_1^9,x_2^7,x_3^5,x_4^4)$. Note that tables $T_2$ and $T_3$ also give the same ideal: $$K(T_1)=K(T_2)=K(T_3)=(x_1^9,x_2^7,x_3^5,x_4^4).$$

\begin{figure}[ht]
\begin{center}
\begin{tikzpicture}[scale=1.4]

\draw[red,thick]  (2,-2)--(2,-1)--(4,-1);
\draw[red,thick] (2,-1)--(1,0);  

\draw[green,thick]  (2,-1)--(3,-2)--(4,-2);
\draw[green,thick] (2,-1)--(2,0); 

\draw[red,thick]  (6,-1)--(8,-1);
\draw[red,thick] (6,-1)--(5,0); 

\draw[dashed] (4.5,0.5)-- (4.5,-2.5);
\draw[dashed] (8.5,0.5)-- (8.5,-2.5);

\node [above, thin] at (1,0) {$12$};
\node [above, thin] at (2,0) {$7$};
\node [above, thin] at (3,0) {$5$};
\node [above, thin] at (4,0) {$4$};
\node [above, thin] at (1,-1) {$3$};
\node [above, thin] at (2,-1) {$4$};
\node [above, thin] at (3,-1) {$3$};
\node [above, thin] at (4,-1) {$2$};
\node [above, thin] at (1,-2) {$0$};
\node [above, thin] at (2,-2) {$3$};
\node [above, thin] at (3,-2) {$2$};
\node [above, thin] at (4,-2) {$1$};

\node [above, thin] at (5,0) {$12$};
\node [above, thin] at (6,0) {$7$};
\node [above, thin] at (7,0) {$5$};
\node [above, thin] at (8,0) {$4$};
\node [above, thin] at (5,-1) {$3$};
\node [above, thin] at (6,-1) {$7$};
\node [above, thin] at (7,-1) {$3$};
\node [above, thin] at (8,-1) {$2$};

\node [above, thin] at (9,0) {$9$};
\node [above, thin] at (10,0) {$7$};
\node [above, thin] at (11,0) {$5$};
\node [above, thin] at (12,0) {$4$};

    \foreach \x in {1,...,4}{
    \foreach \y in {-2,...,0}{
    \fill[fill=black] (\x,\y) circle (0.03 cm);
    }}
    \foreach \x in {5,...,8}{
    \foreach \y in {-1,...,0}{
    \fill[fill=black] (\x,\y) circle (0.03 cm);
    }}
    \foreach \x in {9,...,12}{
    \foreach \y in {0,...,0}{
    \fill[fill=black] (\x,\y) circle (0.03 cm);
    }}

\end{tikzpicture}
\end{center}
\caption{Three different tables representing the same ideal $K=(x_1^9,x_2^7,x_3^5,x_4^4).$}
\label{table_fig4}
\end{figure}
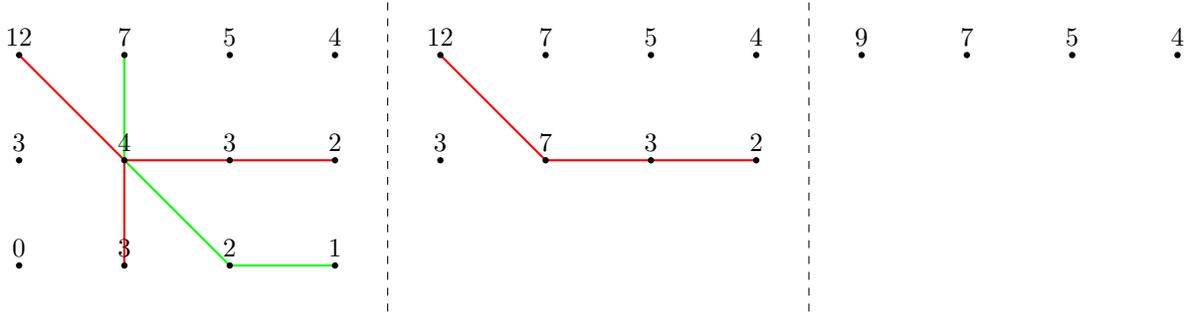
\end{ex}

\begin{definition}
A \emph{generalised table} is a disjoint union of tables in disjoint sets of column indices (variables). For example, a $(0,n)$-table is a disjoint union of $n$ $(0,1)$-tables. To a generalised table we associate the sum of ideals associated to the individual tables inside the polynomial ring over the (disjoint) union of variables appearing in all the tables.
\end{definition}

\begin{ex}
\label{tableunion}
Consider the disjoint union of tables $T=T_1\sqcup T_2\sqcup T_3$ in the Figure \ref{table_fig5}. The corresponding ideal in $\mathbb{K}[x_1,\ldots,x_8]$ is $$K(T)=K(T_1)+K(T_2)+K(T_3)=(x_2^{12},x_3^{9},x_5^{6},x_7^{4}, x_2^{9}(x_3^3,x_5^3,x_7^3),x_2^9x_3(x_5,x_7^2), x_4^3, x_1^8,x_6^6,x_8^7,x_1^3(x_6^3,x_8^2))$$

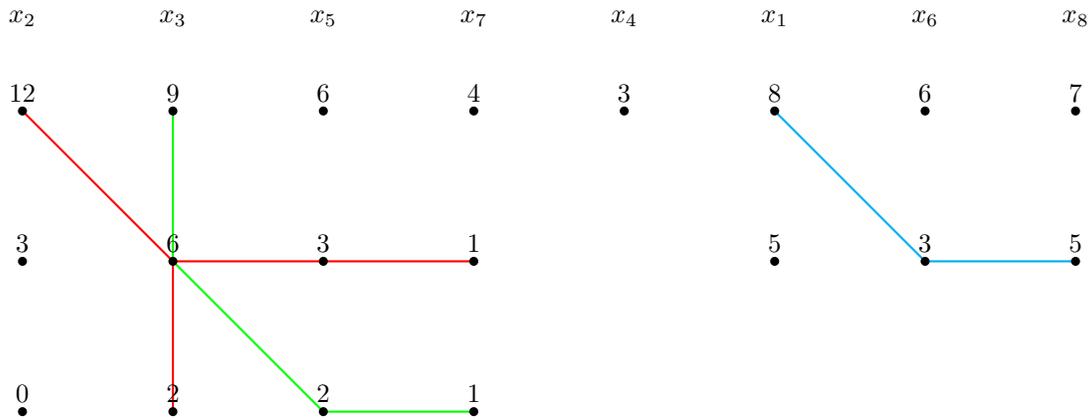
\begin{figure}[H]
\centering
\begin{tikzpicture}[scale=2]

\draw[red,thick]  (2,-2)--(2,-1)--(4,-1);
\draw[red,thick] (2,-1)--(1,0);  

\draw[green,thick]  (2,-1)--(3,-2)--(4,-2);
\draw[green,thick] (2,-1)--(2,0); 

\draw[cyan,thick]  (7,-1)--(8,-1);
\draw[cyan,thick] (7,-1)--(6,0);

\node [above, thin] at (1,0.5) {$x_2$};
\node [above, thin] at (2,0.5) {$x_3$};
\node [above, thin] at (3,0.5) {$x_5$};
\node [above, thin] at (4,0.5) {$x_7$};
\node [above, thin] at (5,0.5) {$x_4$};
\node [above, thin] at (6,0.5) {$x_1$};
\node [above, thin] at (7,0.5) {$x_6$};
\node [above, thin] at (8,0.5) {$x_8$};
\node [above, thin] at (1,0) {$12$};
\node [above, thin] at (2,0) {$9$};
\node [above, thin] at (3,0) {$6$};
\node [above, thin] at (4,0) {$4$};
\node [above, thin] at (1,-1) {$3$};
\node [above, thin] at (2,-1) {$6$};
\node [above, thin] at (3,-1) {$3$};
\node [above, thin] at (4,-1) {$1$};
\node [above, thin] at (1,-2) {$0$};
\node [above, thin] at (2,-2) {$2$};
\node [above, thin] at (3,-2) {$2$};
\node [above, thin] at (4,-2) {$1$};

\node [above, thin] at (5,0) {$3$};
\node [above, thin] at (6,0) {$8$};
\node [above, thin] at (7,0) {$6$};
\node [above, thin] at (8,0) {$7$};
\node [above, thin] at (6,-1) {$5$};
\node [above, thin] at (7,-1) {$3$};
\node [above, thin] at (8,-1) {$5$};

    \foreach \x in {1,...,4}{
    \foreach \y in {-2,...,0}{
    \fill[fill=black] (\x,\y) circle (0.03 cm);
    }}
    \fill[fill=black] (5,0) circle (0.03 cm);
    \foreach \x in {6,...,8}{
    \foreach \y in {-1,...,0}{
    \fill[fill=black] (\x,\y) circle (0.03 cm);
    }}

\end{tikzpicture}
\caption{A disjoint union of three tables}
\label{table_fig5}
\end{figure}

\end{ex}

\begin{remark}
\label{permute}
Within any table, unconstrained columns can be permuted. The resulting matrix is still a table and the corresponding ideal does not change. When permuting unconstrained columns, it is important to permute them together with the corresponding labels in order to keep track of which column corresponds to which variable. Individual tables can of course be permuted with each other. We do not distinguish between generalised tables which are equal up to these permutations.
\end{remark}

\begin{definition}
An ideal $K\in \mathbb{K}[x_1,\ldots, x_n]$ will be called a \emph{table ideal} if there exists a generalised table $T$ in variables $x_1,\ldots, x_n$ such that $K=K(T)$.
\end{definition}

\begin{definition}
A generalised table $T$ in variables $x_1,\ldots, x_n$ is called \emph{proper} if $K(T)\not=\mathbb{K}[x_1,\ldots, x_n]$.
\end{definition}

\begin{remark}
Note that if $K$ is a proper table ideal, then $\mathbb{K}[x_1,\ldots, x_n]/K$ is Artinian.
\end{remark}

\begin{definition}
Let $\mathbb{K}$ be a field and let $A =\bigoplus_{i=0}^{i=c} A_i$ be a standard graded Artinian
$\mathbb{K}$-algebra with $A_c\not=(0)$.
We say that $A$ has the \emph{strong Lefschetz property (SLP)} if there exists a linear form $\ell\in A_1$ such that the multiplication map
$$\times \ell^d\colon A_i\mapsto A_{d+i}$$ 
has full rank for all $1\le d\le c-1$ and $0\le i \le c-d$. If $\dim (A_i)=\dim(A_{c-i})$ for all $i=0,1,\ldots,c$, we say that $A$ has a \emph{symmetric Hilbert series}. We say that 
$A$ has the \emph{SLP in the narrow sense} if $A$ has the SLP and its Hilbert series is symmetric. 
\end{definition}
For more details on Lefschetz properties, see \cite{SLP}

\begin{theorem}[Theorem~3 in \cite{table}]\label{tablethm}
Let $T$ be a proper $($single$)$ table in variables $x_1,\ldots, x_n$ and let $\mathbb{K}$ be a field of characteristic zero. Then $\mathbb{K}[x_1,\ldots, x_n]/K(T)$ has the SLP in the narrow sense,
and its maximal socle degree is $d_1+\cdots+d_n -\alpha_{1,1}-n$ $($where we set $\alpha_{1,1}:=0$ if $s=0)$.
\end{theorem}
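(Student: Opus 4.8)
The plan is to argue by induction on the number $s$ of constrained columns, proving along the way the slightly stronger statement that $\ell = x_1 + \cdots + x_n$ is a strong Lefschetz element. The base case $s=0$ is classical: then $K(T) = (x_1^{d_1},\ldots,x_n^{d_n})$ and the quotient is the monomial complete intersection $\mathbb{K}[x_1]/(x_1^{d_1}) \otimes \cdots \otimes \mathbb{K}[x_n]/(x_n^{d_n})$, which in characteristic zero has the SLP in the narrow sense (a classical result of Stanley and of Watanabe) with $x_1+\cdots+x_n$ as a Lefschetz element and socle degree $\sum_i d_i - n$; this is the claimed value under the convention $\alpha_{1,1}=0$.

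For the inductive step, the first move is to peel off the variable $x_1$. Every generator of $K_1(T),\ldots,K_s(T)$ is divisible by $x_1^{p}$ with $p:=d_1-\alpha_{1,1}$, and dividing those generators by $x_1^{p}$ produces exactly $K(T')$, where $T'$ is obtained from $T$ by deleting the first row and the first column and declaring $d'_j:=d_j-\alpha_{1,j}$; one checks $T'$ is a genuine $(s-1,n-1)$-table, the point being that axiom $(iii)$ for $T'$ at column $k$ is precisely axiom $(iii)$ for $T$ at column $k+1$, while axiom $(ii)$ for $T$ forces $d'_j\ge 0$. Hence $K(T)=(x_1^{d_1},x_2^{d_2},\ldots,x_n^{d_n})+x_1^{p}K(T')$, and reading off a monomial basis exhibits $A:=\mathbb{K}[x_1,\ldots,x_n]/K(T)$ as the middle term of a short exact sequence of graded $R$-modules $0\to C_2\to A\to C_1\to 0$, where $C_1=\mathbb{K}[x_1,\ldots,x_n]/(x_1^{p},x_2^{d_2},\ldots,x_n^{d_n})$ is again a monomial complete intersection and $C_2$ is, up to a degree shift by $p$, the tensor product $\mathbb{K}[x_1]/(x_1^{\alpha_{1,1}})\otimes \mathbb{K}[x_2,\ldots,x_n]/K(T')$. (The degenerate cases $p=0$, $\alpha_{1,1}=0$, or $K(T')$ improper I would dispatch separately and quickly; for instance when $p=0$ the algebra $A$ is literally such a tensor product and one is done by induction.)

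Now $C_1$ has the SLP in the narrow sense by the base case, and $C_2$ does too, via the inductive hypothesis applied to $T'$ together with the characteristic-zero fact that a tensor product of narrow-sense SLP algebras is again one, with Lefschetz element the sum of the factors'; so $\ell=x_1+\cdots+x_n$ is a Lefschetz element for both pieces. The decisive calculation is to feed in axiom $(iii)$ at $k=1$, namely $d_1=\sum_{j\ge 2}\alpha_{1,j}+\alpha_{2,2}$, to check that $C_1$ has socle degree exactly $c:=\sum_j d_j-\alpha_{1,1}-n$ and that, after the shift, $C_2$ has Hilbert function symmetric about the very same centre $c/2$; this yields a symmetric Hilbert function for $A$ and identifies its socle degree as $c$. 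For the SLP of $A$ itself, I would multiply the short exact sequence by $\ell^k$ to obtain a map of short exact sequences, observe that the socle-degree alignment forces $\ell^k$ to be the ``middle'' isomorphism on both $C_1$ and $C_2$ when the internal degree equals $(c-k)/2$, invoke the five lemma to conclude $\ell^k\colon A_{(c-k)/2}\to A_{(c+k)/2}$ is an isomorphism for every admissible $k$, and finally upgrade this to full rank of all the maps $\times\ell^{j}$ by the elementary factorisation $\ell^{c-2i}=\ell^{c-2i-j}\circ\ell^{j}$ combined with the symmetry of the Hilbert function.

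The step I expect to be the real obstacle is exactly this coordination in the middle of the induction: spotting the correct way to strip off $x_1$ so that the ``remainder'' is once more a table ideal, and then seeing that relation $(iii)$ is precisely what makes the socle degrees of $C_1$ and of the shifted $C_2$ line up, so that their Lefschetz ``middles'' sit in the same internal degree. Without that alignment the five-lemma step collapses and one does not even recover a symmetric Hilbert series; with it in hand, everything else — the monomial-basis bookkeeping, tensor-product stability of the SLP, the factorisation argument — is routine.
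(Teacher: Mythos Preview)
The paper does not give its own proof of this theorem; it is quoted as Theorem~3 of \cite{table} and invoked as a black box, so there is no in-paper argument to compare against.

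Your approach is correct and is, in fact, in the spirit of the paper's later Remark~\ref{rem: induction}, where the same ``peel off $x_1$'' manoeuvre---passing from $T$ to an $(s-1,n-1)$-table $T'$ with $d'_j=d_j-\alpha_{1,j}$ and $\alpha'_{i,j}=\alpha_{i+1,j}$---is used for the uniqueness proof. The two computations that drive everything check out: the colon ideal $K(T):x_1^{\,d_1-\alpha_{1,1}}$ is exactly $(x_1^{\alpha_{1,1}})+K(T')$, so your identification of $C_2$ is right; and feeding condition~$(iii)$ at $k=1$, namely $d_1=\sum_{j\ge 2}\alpha_{1,j}+\alpha_{2,2}$, into the socle-degree count for the shifted $C_2$ gives centre of symmetry
\[
\tfrac12\Bigl(2p+(\alpha_{1,1}-1)+\sum_{j\ge 2}(d_j-\alpha_{1,j})-\alpha_{2,2}-(n-1)\Bigr)=\tfrac12\Bigl(\sum_j d_j-\alpha_{1,1}-n\Bigr)=\tfrac{c}{2},
\]
matching that of $C_1$. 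The five-lemma and factorisation endgame is then the standard one for extensions of narrow-sense SLP modules that share a Lefschetz element and a centre of symmetry. The only place to be careful in a full write-up is the cluster of degenerate cases you flag ($p=0$, $\alpha_{1,1}=0$, $T'$ improper): each collapses one of $C_1$, $C_2$ to zero and is easy, but they do need to be said.
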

From \Cref{tablethm} and Theorem~3.34 in \cite{SLP} we conclude the following.

\begin{corollary}
Let $K$ be a proper table ideal and $\mathbb{K}$ be a field of characteristic zero. Then $\mathbb{K}[x_1,\ldots, x_n]/K$ has the SLP in the narrow sense. The maximal socle degree is the sum of maximal socle degrees of algebras coming from individual tables.
\end{corollary}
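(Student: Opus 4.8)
The plan is to reduce to the single-table case handled by \Cref{tablethm} via the tensor-product structure of quotients by ideals in disjoint variables. First I would recall that, by definition, a table ideal $K$ arises from a generalised table $T$, i.e.\ a disjoint union $T = T_1 \sqcup \cdots \sqcup T_r$ of single tables $T_i$, each living in its own set of variables: say $T_i$ uses $\{x_j : j \in S_i\}$ with the $S_i$ pairwise disjoint and $S_1 \cup \cdots \cup S_r = \{1,\ldots,n\}$ (variables not appearing in any table correspond to trivial $(0,1)$-tables). Then $K = K(T) = K(T_1) + \cdots + K(T_r)$ inside $\mathbb{K}[x_1,\ldots,x_n]$, where each $K(T_i)$ is generated by monomials in the variables indexed by $S_i$ only.

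Next I would invoke the standard isomorphism of graded $\mathbb{K}$-algebras
\[
\mathbb{K}[x_1,\ldots,x_n]/K \;\cong\; A_1 \otimes_{\mathbb{K}} \cdots \otimes_{\mathbb{K}} A_r, \qquad A_i := \mathbb{K}[x_j : j \in S_i]/K(T_i),
\]
which holds because the generators of the ideals $K(T_i)$ involve disjoint sets of variables. Since $K$ is proper, none of the $K(T_i)$ can equal its ambient subring (otherwise $K$ would be all of $\mathbb{K}[x_1,\ldots,x_n]$), so each $T_i$ is a proper single table. By \Cref{tablethm}, each $A_i$ is a standard graded Artinian $\mathbb{K}$-algebra with the SLP in the narrow sense, and its top nonzero degree — equivalently, its maximal socle degree, since the top graded piece of a standard graded Artinian algebra always lies in the socle — equals $\sum_{j \in S_i} d_j^{(i)} - \alpha_{1,1}^{(i)} - |S_i|$.

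Then I would apply Theorem~3.34 in \cite{SLP}: over a field of characteristic zero, the tensor product of finitely many standard graded Artinian $\mathbb{K}$-algebras each possessing the SLP in the narrow sense again has the SLP in the narrow sense (the statement there is for two factors; the general case follows by induction on $r$). Applied to $A_1 \otimes_{\mathbb{K}} \cdots \otimes_{\mathbb{K}} A_r$, this yields that $\mathbb{K}[x_1,\ldots,x_n]/K$ has the SLP in the narrow sense. For the socle degree, note $(A \otimes_{\mathbb{K}} B)_k = \bigoplus_{p+q=k} A_p \otimes_{\mathbb{K}} B_q$, so the top nonzero degree of a tensor product is the sum of the top nonzero degrees of the factors; iterating, the maximal socle degree of $\mathbb{K}[x_1,\ldots,x_n]/K$ is $\sum_{i=1}^r (\text{maximal socle degree of } A_i) = \sum_{i=1}^r \big(\sum_{j \in S_i} d_j^{(i)} - \alpha_{1,1}^{(i)} - |S_i|\big)$, which is the assertion.

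The only point requiring care is the appeal to Theorem~3.34: one must check that all its hypotheses — characteristic zero, standard grading, Artinian, symmetric Hilbert series — hold for each $A_i$, which is precisely what \Cref{tablethm} guarantees, and that the two-factor statement is legitimately extended to finitely many tensor factors by induction. The degenerate cases ($s=0$ tables, where $A_i = \mathbb{K}[x]/(x^{d})$ trivially has the SLP in the narrow sense, and variables absent from $T$) are absorbed uniformly into the same argument; everything else is the bookkeeping of the disjoint-variable tensor decomposition.
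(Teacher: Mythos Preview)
Your proposal is correct and follows exactly the route the paper indicates: apply \Cref{tablethm} to each single-table factor and then invoke Theorem~3.34 in \cite{SLP} (closure of the SLP in the narrow sense under tensor products) via the disjoint-variable tensor decomposition, with the socle degree additivity following from the grading on the tensor product. The paper states this in one line; you have simply supplied the details.
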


\subsection{Tables in a normal form}
\label{sec:new_results}

\begin{definition}
A proper $(s,n)$-table $T$ with $s\ge 1$ is said to be \emph{in a normal form} if the following conditions hold: 
\begin{enumerate}
    \item $\alpha_{i,i}\neq 0$ for all $i=1,\ldots, s$;
    \item for all $j\ge s+1$, at least one of the numbers $\alpha_{1,j},\ldots,\alpha_{s,j}$ is different from zero. If this condition fails, we will say that the corresponding column is an \emph{unconstrained almost zero column} $($note that $\alpha_{0,j}=d_j$ has to be nonzero, otherwise the table is improper; this justifies the word "almost"$)$. Note that if we have a constrained almost zero column $j$, $1\le j\le s$, then $\alpha_{1,j}=\cdots=\alpha_{s,j}=0$. In particular, $\alpha_{j,j}=0$, thus the table fails the previous condition anyway;
    \item $\sum_{i=1}^{s}{\alpha_{i,j}}<d_j$ for all $j=1,\ldots, n$. Note that the definition of a table does allow for non-strict inequalities. So, if an equality occurs in some column $j$, we will call this a \emph{singularity} in column $j$.
\end{enumerate}
Any proper $(0,n)$-table is considered to be in a normal form.
\end{definition}
\begin{definition}
A proper generalised table is said to be \emph{in a normal form} if it is a disjoint union of (proper) tables, each of which is in a normal form.
\end{definition}
Consider \Cref{difftables}. The first two (generalised) tables are not in a normal form, whereas the third (generalised) table is in a normal form. In \Cref{reduction} we show how to reduce any generalised table to a normal form without changing the corresponding ideal.
\begin{remark}

\label{rmksupp}
Given a proper table in a normal form, $\supp(m_{i,j})=\{1,\ldots, i\}\cup \{j\}$, since no variable outside of this set divides $m_{i,j}$. 
Any variable in this set divides $m_{i,j}$. Indeed, assume it is not the case for some $t\in \{1,\ldots, i\}\cup\{j\}$. Then $d_t=\alpha_{1,t}+\cdots+\alpha_{i,t}$. Note that for $i=0$ this implies  $t=j$ and $d_j=0$ and thus the table is improper. Otherwise, if $i\ge 1$ (implying $s\ge 1$), we get $d_t=\alpha_{1,t}+\cdots+\alpha_{i,t}\le \alpha_{1,t}+\cdots+\alpha_{s,t}\le d_t$, where the last inequality comes from the definition of a table. This implies $d_t=\alpha_{1,t}+\cdots+\alpha_{s,t}$ and thus there is a singularity in column $t$, which means that the table is not in a normal form.
\end{remark}

\begin{lemma}\label{minimalgens}
Consider a proper table $T$ in a normal form and let $K(T)$ be the corresponding ideal. Then for any $(i,j)$ with $0\le i\le s$, $i<j\le n$ we have the following: $\alpha_{i,j}\not=0$ if and only if $m_{i,j}$ is a minimal generator of $K(T)$.
\end{lemma}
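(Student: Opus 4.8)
The plan is to prove the two implications of the biconditional separately, leaning on the explicit form of the generators $m_{i,j}$ of $K(T)$ together with the bookkeeping imposed by the normal form. First I would record what I need. By condition $(i)$ of \Cref{def-table} we have $\alpha_{i,t}=0$ whenever $i>t$, so the common factor of the generators of $K_i(T)$ is the monomial $\mu_i:=\prod_{t=1}^{i}x_t^{\,d_t-\alpha_{1,t}-\cdots-\alpha_{t,t}}$; these satisfy $\mu_0\mid\mu_1\mid\cdots\mid\mu_s$ with $\mu_i=\mu_{i-1}\cdot x_i^{\,d_i-\alpha_{1,i}-\cdots-\alpha_{i,i}}$, and $m_{i,j}=\mu_i\cdot x_j^{\,d_j-\alpha_{1,j}-\cdots-\alpha_{i,j}}$. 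By the no-singularity condition $\sum_{i=1}^{s}\alpha_{i,j}<d_j$ in the definition of a normal form, the exponent $d_i-\alpha_{1,i}-\cdots-\alpha_{i,i}$ is strictly positive for $1\le i\le s$, and by \Cref{rmksupp} we have $\supp(m_{i,j})=\{1,\ldots,i\}\cup\{j\}$. Finally, since the $m_{i,j}$ generate $K(T)$, the monomial $m_{i,j}$ is a minimal generator if and only if no $m_{i',j'}$ with $(i',j')\neq(i,j)$ divides it.

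For the implication ``$\alpha_{i,j}=0\Rightarrow m_{i,j}$ is not minimal'': since $T$ is proper, $\alpha_{0,j}=d_j\neq0$, so $i\ge1$, and then $m_{i-1,j}$ is among the listed generators (its index pair satisfies $0\le i-1\le s$ and $i-1<j\le n$). Comparing $m_{i-1,j}=\mu_{i-1}\cdot x_j^{\,d_j-\alpha_{1,j}-\cdots-\alpha_{i-1,j}}$ with $m_{i,j}=\mu_i\cdot x_j^{\,d_j-\alpha_{1,j}-\cdots-\alpha_{i,j}}$: since $\alpha_{i,j}=0$ the $x_j$-exponents agree, and $\mu_{i-1}\mid\mu_i$, so $m_{i-1,j}\mid m_{i,j}$; the two are distinct because $i\in\supp(m_{i,j})$ while $i\notin\supp(m_{i-1,j})$ by \Cref{rmksupp}. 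Hence $m_{i,j}$ is a proper multiple of another generator and is not minimal.

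For the implication ``$\alpha_{i,j}\neq0\Rightarrow m_{i,j}$ is minimal'': suppose, for contradiction, that $m_{i',j'}\mid m_{i,j}$ for some generator with $(i',j')\neq(i,j)$. Then by \Cref{rmksupp}, $\{1,\ldots,i'\}\cup\{j'\}=\supp(m_{i',j'})\subseteq\supp(m_{i,j})=\{1,\ldots,i\}\cup\{j\}$, which gives $j'\in\{1,\ldots,i\}\cup\{j\}$ and, by a one-line check ($i'>i$ would force $i'=i+1=j$ and then $j'>j$, impossible), $i'\le i$. If $j'=j$, then $i'<i$ (else $(i',j')=(i,j)$), and divisibility on the variable $x_j$ forces $d_j-(\alpha_{1,j}+\cdots+\alpha_{i',j})\le d_j-(\alpha_{1,j}+\cdots+\alpha_{i,j})$, i.e.\ $\alpha_{i'+1,j}+\cdots+\alpha_{i,j}\le0$; as these are nonnegative, all vanish, in particular $\alpha_{i,j}=0$, a contradiction. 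If $j'\le i$, then $i'<j'\le i\le s$; since $x_{j'}$ occurs in $m_{i',j'}$ with exponent $d_{j'}-(\alpha_{1,j'}+\cdots+\alpha_{i',j'})$ and in $m_{i,j}$ with exponent $d_{j'}-(\alpha_{1,j'}+\cdots+\alpha_{i,j'})$, divisibility forces $\alpha_{i'+1,j'}+\cdots+\alpha_{i,j'}\le0$, whence $\alpha_{j',j'}=0$ (note $j'\in\{i'+1,\ldots,i\}$), contradicting the requirement $\alpha_{i,i}\neq0$ in the definition of a normal form since $1\le j'\le s$. Thus no other generator divides $m_{i,j}$, so it is minimal.

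The computations involved are short; the only delicate point is the zero/positivity bookkeeping. One needs $\alpha_{i,t}=0$ for $i>t$ precisely so that the ``lower'' generator $m_{i-1,j}$ divides $m_{i,j}$ (and not the reverse), the strict positivity of $d_i-\alpha_{1,i}-\cdots-\alpha_{i,i}$ to get distinctness in the first implication, and the nonvanishing of the diagonal entries $\alpha_{i,i}$ to close the second. One should also check each time that the auxiliary monomial invoked ($m_{i-1,j}$, resp.\ $m_{i',j'}$) really is one of the generators listed after \Cref{def-table}; the case $i=0$ is harmless, since then the first implication is vacuous ($\alpha_{0,j}=d_j\neq0$) and in the second $\supp(m_{0,j})=\{j\}$ forces $(i',j')=(0,j)$ at once.
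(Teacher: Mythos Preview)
Your proof is correct and follows essentially the same route as the paper's: the contrapositive of ``$\Leftarrow$'' is handled by exhibiting $m_{i-1,j}\mid m_{i,j}$, and ``$\Rightarrow$'' is handled by the support inclusion $\{1,\ldots,i'\}\cup\{j'\}\subseteq\{1,\ldots,i\}\cup\{j\}$, splitting into the two cases $j'=j$ (compare $x_j$-exponents to force $\alpha_{i,j}=0$) and $j'\le i$ (compare $x_{j'}$-exponents to force $\alpha_{j',j'}=0$). You are in fact slightly more careful than the paper in two spots: you explicitly rule out $i'>i$ before splitting into cases, and you verify that $m_{i-1,j}\neq m_{i,j}$ via their supports, which is needed to conclude non-minimality.
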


\begin{proof}
$\Leftarrow$ Assume $\alpha_{i,j}=0$ for some $(i,j)$ with $i<j$. If $i=0$, that is, if $\alpha_{0,j}=d_j=0$, the table is improper, thus we will assume $i\ge 1$. Then
\begin{align*}
m_{i,j} & =\prod_{t\in\{1,\ldots,i\}\cup\{j\}}{x_t^{d_t-\alpha_{1,t}-\cdots-\alpha_{i,t}}} \\
& =\prod_{t\in\{1,\ldots,i-1\}}{x_t^{d_t-\alpha_{1,t}-\cdots-\alpha_{i-1,t}-(\alpha_{i,t}=0)}}x_i^{d_i-\alpha_{1,i}-\cdots-\alpha_{i,i}}x_j^{d_j-\alpha_{1,j}-\cdots-\alpha_{i-1,j}-(\alpha_{i,j}=0)} \\
& =x_i^{d_i-\alpha_{1,i}-\cdots-\alpha_{i,i}}\prod_{t\in\{1,\ldots,i-1\}\cup\{j\}}{x_t^{d_t-\alpha_{1,t}-\cdots-\alpha_{i-1,t}}}    
\end{align*}
Here $\alpha_{i,t}=0$ from the definition of a table (since $i>t$) and $\alpha_{i,j}=0$ by assumption. Given that
$$m_{i-1,j}=\prod_{t\in\{1,\ldots,i-1\}\cup\{j\}}{x_t^{d_t-\alpha_{1,t}-\cdots-\alpha_{i-1,t}}},$$
we conclude that $m_{i-1,j}$ divides $m_{i,j}$.

$\Rightarrow$ Let $\alpha_{i,j}\not=0$ for some $(i,j)$ with $i<j$. Assume $m_{i,j}$ is not a minimal generator of $K(T)$, that is, there exists some $m_{p,q}$ with $0\le p\le s$, $p<q\le n$ and $(p,q)\not=(i,j)$ which divides $m_{i,j}$. Then $\supp(m_{p,q})=\{1,\ldots, p\}\cup \{q\}\subseteq\{1,\ldots, i\}\cup\{j\}=\supp(m_{i,j})$. This can happen in $2$ cases:
\begin{itemize}
    \item[(1)] $q=j$ and $p<i$,
    \item[(2)] $q\le i$.
\end{itemize}
Note that for $i=0$ none of these cases are possible and we are done. Thus we will assume $i\ge 1$ (implying $s\ge 1$).
Consider the first case. Then $(p,q)=(i-k,j)$ for some $k>0$. Compare $x_j$-exponents in $m_{i,j}$ and $m_{i-k,j}$.
In $m_{i,j}$ the $x_j$-exponent is $d_j-\alpha_{1,j}-\cdots-\alpha_{i,j}$.
In $m_{i-k,j}$ the $x_j$-exponent is
$d_j-\alpha_{1,j}-\cdots-\alpha_{i-k,j}$.

Clearly, the second number is strictly larger than the first one (and thus $m_{i-k,j}$ can not divide $m_{i,j}$), unless $\alpha_{i-k+1,j}=\alpha_{i-k+2,j}=\ldots=\alpha_{i,j}=0$. But $\alpha_{i,j}=0$ contradicts the assumption of this lemma. 


Now consider the second case, that is, assume that $m_{i,j}$ is divisible by $m_{p,q}$ with $p<q\le i<j$. We will compare the exponents of $x_q$ in these monomials.
In $m_{i,j}$ the exponent is $d_q-\alpha_{1,q}-\dots-\alpha_{i,q}=d_q-\alpha_{1,q}-\dots-\alpha_{q,q}$.
In $m_{p,q}$ the exponent is $d_q-\alpha_{1,q}-\dots-\alpha_{p,q}$.
Clearly, the second number is strictly larger than the first one (and thus $m_{p,q}$ can not divide $m_{i,j}$), unless $\alpha_{p+1,q}=\alpha_{p+2,q}=\ldots=\alpha_{q,q}=0$. But $\alpha_{q,q}=0$ implies that the table is not in a normal form. 
\end{proof}

\subsection{Table ideals and simplicial complexes}
\label{sec:simplices}
Let $K$ be a proper monomial ideal in $\mathbb{K}[x_1,\ldots,x_n]$. We associate to $K$ a weighted simplicial complex $S(K)$ as follows. First we generate a simplicial complex for all $x_{i_1}\ldots x_{i_t}$ such that $\{i_1,\ldots,i_t\}$ is the support of some minimal generator of $K$. To each face we assign a non-negative integer weight which is exactly the number of minimal generators of $K$ with the corresponding support.
\begin{example}
Consider the ideal from \Cref{tableunion}, that is,
$$K(T)=K(T_1)+K(T_2)+K(T_3)=(x_2^{12},x_3^{9},x_5^{6},x_7^{4}, x_2^{9}(x_3^3,x_5^3,x_7^3),x_2^9x_3(x_5,x_7^2), x_4^3, x_1^8,x_6^6,x_8^7,x_1^3(x_6^3,x_8^2)).$$
The simplicial complex is described below. All faces, except those marked with zeros, have weight $1$.

\begin{figure}[H]
\centering
\begin{tikzpicture}[scale=2]

\filldraw [fill=cyan, fill opacity=0.5] (1, 0) -- (2, 0) -- (2,-1) -- (1, 0) -- (1,-1) -- (2, -1);
\draw [thick] (1, 0) -- (2, 0) -- (2,-1) -- (1, 0) -- (1,-1) -- (2, -1);
\draw [thick] (4, 0) -- (4,-1) -- (5,-1);

\node [above, left, thin] at (1,0) {$x_2$};
\node [above, right,thin] at (2,0) {$x_7$};
\node [below, left, thin] at (1,-1) {$x_5$};
\node [below, right,thin] at (2,-1) {$x_3$};

\node [below, thin] at (1.5,-1) {$0$};
\node [right, thin] at (2,-0.5) {$0$};

\node [above,right, thin] at (3,0) {$x_4$};
\node [below, left, thin] at (4,-1) {$x_1$};
\node [above, left,thin] at (4, 0) {$x_6$};
\node [below, right,thin] at (5,-1) {$x_8$};

\fill[fill=black] (1, 0) circle (0.03 cm);
\fill[fill=black] (2, 0) circle (0.03 cm);
\fill[fill=black] (3, 0) circle (0.03 cm);
\fill[fill=black] (4, 0) circle (0.03 cm);
\fill[fill=black] (1,-1) circle (0.03 cm);
\fill[fill=black] (2,-1) circle (0.03 cm);
\fill[fill=black] (4,-1) circle (0.03 cm);
\fill[fill=black] (5,-1) circle (0.03 cm);

\end{tikzpicture}
\caption{}
\end{figure}

\begin{remark}
\label{nicecomplex}
Let $T$ be a proper (single) table in a normal form. Then:
\begin{enumerate}
\item All faces of $S(K(T))$ have weights $0$ or $1$. Indeed, from \Cref{rmksupp} we know that $\supp(m_{i,j})=\{1,\ldots, i\}\cup \{j\}$. Thus different $(i,j)$ will give different supports. Face $x_1\ldots x_ix_j$ has weight $1$ if $m_{i,j}$ is a minimal generator and $0$ otherwise.
\item All vertices of $S(K(T))$ have weight $1$. Indeed, if $x_j$ has weight $0$, then $x_j^{d_j}=m_{0,j}$ is not a minimal generator of $K(T)$. Since $m_{0,j}\leftrightarrow \alpha_{0,j}=d_j$, this, by \Cref{minimalgens} implies $d_j=0$ and thus the table is improper.

\item Clearly, all facets, that is, maximal faces with respect to inclusion, have nonzero weighs (in fact, this holds for any simplicial complex obtained from a proper ideal), thus each of them has weight $1$.
\end{enumerate}
\end{remark}
\begin{remark}
\label{infotable}
Let $T$ be a proper (single) table in a normal form. Then:
\begin{enumerate}
\item The number of vertices ($0$-faces) of $S(K(T))$ equals the number of variables (columns) of $T$. This is quite straightforward.
\item The dimension of $S(K(T))$ equals the number of colours in $T$. Indeed, let $s-1$ be the maximal face dimension. First of all, since there is a face of dimension $s-1$, it comes from a minimal generator $m_{i,j}$ whose support has cardinality $s$. Since $\supp(m_{i,j})=\left\{1,\ldots,i\right\}\cup\left\{j\right\}$, we conclude that $i=s-1$. Since index $i$ is less than or equal to the number of colours of $T$, we conclude that $T$ has at least $s-1$ colours. Now assume that $T$ has at least $s$ colours. We do not have minimal generators $m_{s,s+1}, \ldots, m_{s,n}$ (otherwise we would have an $s$-dimensional face), which by \Cref{minimalgens} implies $\alpha_{s,s+1}=\ldots=\alpha_{s,n}=0$. But this is impossible: condition $s$ then says $d_s=\sum_{i=1}^{s-1}{\alpha_{i,s}}+\sum_{j=s+1}^{n}{\alpha_{s,j}}=\sum_{i=1}^{s-1}{\alpha_{i,s}}$. Together with this, according to the definition of a table, we have $\sum_{i=1}^{s}{\alpha_{i,s}}\le d_s$, which implies $\alpha_{s,s}=0$ and thus $T$ is not in a normal form.
\item Let $x_{i_1},\ldots, x_{i_s}$ be the order of constrained columns in $T$ with $s$ colours. Then $x_{i_k}$ is contained in the intersection of all faces with weights $1$ of $S(K(T))$ whose dimensions are greater than or equal to $k$. In other words, the first constrained variable is contained in all faces with weights $1$, except possibly vertices, the second constrained variable is contained in all faces with weights $1$, except possibly vertices and edges and so on. 
\end{enumerate}
\end{remark}

\end{example}
\begin{definition}
We call a weighted simplicial complex \emph{connected} if so is the underlying unweighted simplicial complex.
\end{definition}
\begin{definition}
A generalised table $T$ is said to have $k$ connected components if it is a disjoint union of $k$ tables. Each $(0,n)$-table counts as $n$ tables. In other words, the number of connected components of $T$ is equal to the number of connected components of this table treated as a simple graph. 
\end{definition}
\begin{theorem}
\label{complexthm}
Let $T$ be a proper generalised table in a normal form with $k$ connected components. Then $S(K(T))$ has $k$ connected components. 
\end{theorem}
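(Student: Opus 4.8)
The plan is to reduce everything to two facts: that $S(K(T))$ is the disjoint union of the complexes attached to the connected components of $T$, and that the complex of a single table in a normal form is connected.

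Write $T = T_1 \sqcup \cdots \sqcup T_k$ as the disjoint union of its connected components, where (following the conventions set up in the definitions) a $(0,n)$-summand is split into $n$ copies of $(0,1)$-tables; thus each $T_i$ is either a $(0,1)$-table or a proper single table in a normal form with at least one colour, each lives in its own set of columns, these column sets are pairwise disjoint, and $K(T) = K(T_1) + \cdots + K(T_k)$. I would first observe that the union of the minimal generating sets of the $K(T_i)$ is already the minimal generating set of $K(T)$: it generates $K(T)$, and since a minimal generator of a proper $K(T_i)$ is a non-unit monomial in the columns of $T_i$, generators coming from different components involve disjoint non-empty variable sets, so neither divides another. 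Consequently the supports of the minimal generators of $K(T)$ form the disjoint union of the supports of the minimal generators of the $K(T_i)$; taking downward closures merges nothing because these supports sit in the pairwise disjoint vertex sets (the columns of the $T_i$; cf.\ \Cref{nicecomplex}(2)), and face weights are unaffected. Hence $S(K(T))$ is the disjoint union of $S(K(T_1)),\ldots,S(K(T_k))$, and its number of connected components is $\sum_{i=1}^k(\text{number of connected components of }S(K(T_i)))$.

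It remains to show each $S(K(T_i))$ is connected. If $T_i$ is a $(0,1)$-table this is trivial: $S(K(T_i))$ is a single vertex. Otherwise $T_i$ is a proper single table in a normal form with $s_i \ge 1$ colours; denote its constrained columns, in order, by $y_1,\ldots,y_{s_i}$. I would show that every column $y$ of $T_i$ is joined to $y_1$ by an edge, which forces connectedness. By \Cref{rmksupp} and \Cref{minimalgens}, every minimal generator is some $m_{a,b}$ with support $\{y_1,\ldots,y_a\}\cup\{y_b\}$. If $y$ is unconstrained, then by condition (2) of a normal form $y$ is not an unconstrained almost zero column, so $\alpha_{a,y}\neq 0$ for some $1 \le a \le s_i$; then $m_{a,y}$ is a minimal generator, and its support contains both $y_1$ and $y$, so $\{y_1,y\}$ is a face. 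If $y = y_t$ is constrained, then conditions $(ii)$ and $(iii)$ of \Cref{def-table} together with $\alpha_{s_i,s_i}\neq 0$ (condition (1) of a normal form) force $\alpha_{s_i,j}\neq 0$ for some $j > s_i$ — this is exactly the computation already carried out in the proof of \Cref{infotable}(2) — so $m_{s_i,j}$ is a minimal generator whose support $\{y_1,\ldots,y_{s_i}\}\cup\{y_j\}$ contains both $y_1$ and $y_t$, making $\{y_1,y_t\}$ a face. In all cases $y$ and $y_1$ lie in the same connected component of $S(K(T_i))$.

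Putting the two steps together, $S(K(T))$ has $\sum_{i=1}^k 1 = k$ connected components. The only substantive point is the connectedness of the complex of a single table, and the one mildly delicate case there — a constrained column — is handled by the counting argument of \Cref{infotable}(2); everything else is bookkeeping about monomials in disjoint variables.
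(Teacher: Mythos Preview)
Your proof is correct and follows essentially the same strategy as the paper: reduce to a single connected component, then use the normal-form conditions together with \Cref{minimalgens} to show every column shares a face with the first constrained variable. The only difference is organisational: you split into ``unconstrained vs.\ constrained'' columns, while the paper splits according to whether $\alpha_{s,t}\neq 0$; both hinge on the same two facts (some $\alpha_{s,j}\neq 0$ for $j>s$, and no unconstrained almost-zero columns), and your reduction step spells out more carefully why the minimal generators of $K(T)$ are the disjoint union of those of the $K(T_i)$.
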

\begin{proof}
Note that variables appearing in different connected components of a proper generalised table in a normal form will end up in different connected components in the corresponding simplicial complex. Therefore, it is enough to prove this theorem for $k=1$, that is, for proper tables in a normal form with $1$ connected component.
If $s=0$, the table is a (nonzero) singleton and the statement is trivial.  
Let $T$ be a proper table in a normal form with $s\ge1$ colours. Note that by definition $T$ has at least $1$ unconstrained column. Also note that numbers $\alpha_{s,s+1},\ldots, \alpha_{s,n}$ can not all be equal to $0$. Indeed, assume the contrary. Then condition $s$ says $d_s=\sum_{i=1}^{s-1}{\alpha_{i,s}}+\sum_{j=s+1}^{n}{\alpha_{s,j}}=\sum_{i=1}^{s-1}{\alpha_{i,s}}$. Together with this, according to the definition of a table, we have $\sum_{i=1}^{s}{\alpha_{i,s}}\le d_s$, which implies $\alpha_{s,s}=0$ and thus $T$ is not in a normal form. Assume without loss of generality that $\alpha_{s,s+1}\alpha_{s,s+2}\cdots\alpha_{s,r}\not=0$ and $\alpha_{s,r+1}=\dots=\alpha_{s,n}=0$ with $r-s\ge 1$ and $n-r\ge 0$. From \Cref{minimalgens} we conclude that $m_{s,s+1},\ldots, m_{s,r}$ are minimal generators of $K(T)$. Recall that $m_{i,j}$ is a monomial in variables $x_1,\ldots, x_i,x_j$. Therefore, variables $x_1,\ldots,x_r$ belong to the same connected component of $S(K(T))$. If $n=r$, we are done. Otherwise, consider any variable $x_{t}$, $r+1\le t\le n$. Since $\alpha_{s,t}=0$, there is some $\alpha_{i,t}\not=0$ with $0<i<s$. Indeed, if it is not the case, then $T$ is not in a normal form since it has an almost zero column. Then $m_{i,t}$ is a minimal generator in variables $x_1,\ldots, x_i, x_{t}$. Since $i\not=0$, $x_{t}$ is in the same connected component as $x_1$ (and therefore $x_2,\ldots, x_r$).
\end{proof}

\subsection{Uniqueness of a generalised table in a normal form for a table ideal}
\label{sec:uniqueness}
We have seen that different generalised tables can result into the same ideal. If, however, we restrict to generalised tables in a normal form, the answer is not so obvious anymore. Recall that we do not distinguish between generalised tables which are equal up to a permutation of tables or a permutation of unconstrained columns. Note that we can restrict to table ideals $K$ such that $S(K)$ is connected. We can also restrict to the situation of $S(K)$ having dimension $s\ge 1$. Therefore, for the rest of this subsection we will assume $K=K(T)$ with $T$ in a normal form, one connected component and $s\ge 1$ colours.

\begin{definition}
Let $K$ be a table ideal such that $S(K)$ is connected. Let $x_i$ be a variable such that all minimal generators of $K$, except $x_1^{d_1},\ldots, x_n^{d_n}$, have the same $x_i$-exponents. We call $x_i$ a $\emph{possible first} \emph{variable}$. The non-empty set of all such $x_i$ will be denoted by $F(K)$.
\end{definition}

\begin{lemma}
\label{lemma: unique first}
Let $K$ be a table ideal such that $S(K)$ is connected and has dimension $s\ge 1$. Then there exists a unique variable $x_i$ which can possibly be the first constrained variable.
\end{lemma}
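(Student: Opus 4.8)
The plan is to prove that the first constrained column of a normal-form table is an invariant of $K$. Since permuting the unconstrained columns (\Cref{permute}) does not move the first constrained column, it suffices to show that whenever $T$ and $T'$ are proper tables in a normal form with $K(T)=K(T')=K$, their first constrained columns are the same variable; call these $x_p$ and $x_q$. First I would record two necessary conditions. From the generator formula $m_{i,j}=\prod_{t\in\{1,\dots,i\}\cup\{j\}}x_t^{d_t-\alpha_{1,t}-\cdots-\alpha_{i,t}}$ together with normal-form conditions~$1$ and~$3$ (which force $1\le\alpha_{1,1}\le d_p-1$ in the column-indexing of $T$), the $x_p$-exponent of every minimal generator of $K$ other than the pure powers equals $d_p-\alpha_{1,1}$, a number strictly between $0$ and $d_p$. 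Hence $x_p\in F(K)$ and $x_p$ divides every non-pure minimal generator of $K$ (the latter is also item~$3$ of \Cref{infotable}); likewise for $x_q$. Write $G(K)\subseteq F(K)$ for the set of variables whose common exponent on the non-pure minimal generators is positive, so $x_p,x_q\in G(K)$. If $|G(K)|=1$ we are done, and this is the typical situation: for instance, if $S(K)$ has at least two distinct minimal generators of support size $2$, then, being distinct $2$-element sets both containing $x_p$ and both containing $x_q$, their supports meet in a single vertex, forcing $x_q=x_p$.

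The heart of the matter is the case $|G(K)|\ge 2$, where I would prove the sharper statement that $x_p$ is the unique element of $G(K)$ whose pure-power exponent $d_\bullet$ (defined by: $x_\bullet^{d_\bullet}$ is a minimal generator of $K$) is smallest. Fix $x_r\in G(K)$ with $r\ne p$, and let $k\ge2$ be the position of $x_r$ among the columns of $T$. Since $x_r$ divides every non-pure minimal generator while $\supp(m_{i,j})=\{1,\dots,i\}\cup\{j\}$, every non-pure $m_{i,j}$ satisfies $i\ge k$ or $j=k$. Suppose first that some non-pure $m_{i,j}$ with $i\ge k$ exists; then no $m_{i,k}$ with $i<k$ can be a minimal generator, since its $x_r$-exponent would exceed that of $m_{i,j}$ by a sum of $\alpha$'s containing the non-zero diagonal $\alpha_{k,k}$, contradicting $x_r\in F(K)$. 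Therefore every non-pure minimal generator has column-index $i\ge k$, so by \Cref{minimalgens} $\alpha_{i,j}=0$ whenever $1\le i<k$; then conditions~$(iii)$ of \Cref{def-table} collapse to $d_\ell=\alpha_{\ell+1,\ell+1}$ for $1\le\ell\le k-1$, while normal-form condition~$3$ gives $\alpha_{\ell,\ell}<d_\ell$ for $2\le\ell\le k$, and combining these yields $d_p=d_1<d_2<\cdots<d_k=d_r$. If instead no non-pure $m_{i,j}$ has $i\ge k$ (which always occurs when $x_r$ is an unconstrained column), then $x_r\in G(K)$ forces $x_r$ to be the outer variable $x_j$ of every non-pure minimal generator, $x_r\in F(K)$ then forces there to be exactly one such generator, and connectedness of $S(K)$ forces its support to be all of $x_1,\dots,x_n$; so $K=(x_1^{d_1},\dots,x_n^{d_n},\,x_1^{e_1}\cdots x_n^{e_n})$ with $1\le e_t<d_t$ for all $t$, and unwinding conditions~$(iii)$ shows that in any normal-form presentation of such a \emph{single-simplex} ideal the $d$-exponents strictly increase along the columns (so the $d_t$ are pairwise distinct and the first constrained column is again the unique one of smallest $d$). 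In every case $d_p$ is the strict minimum of $\{d_i:x_i\in G(K)\}$, hence the first constrained column is the same for $T$ and $T'$, i.e.\ $p=q$.

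The main obstacle is precisely the case $|G(K)|\ge 2$: once the supports of the minimal generators no longer single out a candidate, the only available leverage is the numerical identity~$(iii)$ of \Cref{def-table} together with the strict inequalities built into normal-form condition~$3$, and the argument hinges on tracking exactly which $\alpha$'s are forced to vanish; the \emph{single-simplex} degeneracy, in which $G(K)$ contains every variable, is the most delicate sub-case. By contrast, the reduction to comparing two tables and the fact that a first constrained variable must lie in $G(K)$ are routine.
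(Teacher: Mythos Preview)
Your argument is correct. The key mechanism is the same as the paper's: once several variables are forced to divide every non-pure minimal generator, enough generating $\alpha$'s vanish that condition~$(iii)$ of \Cref{def-table} collapses to $d_\ell=\alpha_{\ell+1,\ell+1}$, and normal-form condition~$3$ then chains these into strict inequalities $d_1<d_2<\cdots$, singling out the column with smallest $d$.

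The organisation differs, however, and yours is arguably cleaner. The paper argues inductively row by row: assuming $|F(K)|\ge 2$ it shows row~$1$ can have at most one nonzero generating entry, splits into ``exactly one'' versus ``none'', and in the latter case recurses on row~$2$, row~$3$, and so on, each time enlarging the known prefix of constrained variables. You instead fix an arbitrary competitor $x_r\in G(K)$, locate it in column~$k$ of a fixed table $T$, and make a single dichotomy on whether any minimal $m_{i,j}$ has $i\ge k$. This replaces the paper's nested case analysis by one clean split, and your sub-case~B (the single-simplex ideal) isolates precisely the degenerate situation that the paper's Case~1 and its analogues are gesturing at. Your introduction of $G(K)\subseteq F(K)$ is harmless but redundant under the hypotheses: connectedness of $S(K)$ with $\dim S(K)\ge 1$ already forces any common exponent to be positive, so $G(K)=F(K)$.
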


\begin{proof}
Clearly, any variable that can possibly start a table for $K$ is contained in $F(K)$. If $F(K)$ is a singleton, we are done. Now assume $F(K)$ has at least $2$ elements. Let us try to start constructing a table from $x_1$ (assume without loss of generality that $x_1\in F(K)$). First of all, we know that among numbers $\alpha_{1,2}, \ldots, \alpha_{1,n}$ there is at most one nonzero number. Indeed, if $\alpha_{1,i}\not=0\not=\alpha_{1,j}$ for some $i\not=j$, then monomials $m_{1,i}=x_1^{d_1-\alpha_{1,1}}x_i^{d_1-\alpha_{1,i}}$ and $m_{1,j}=x_1^{d_1-\alpha_{1,1}}x_j^{d_1-\alpha_{1,j}}$ are both minimal generators of $K$. Already from these $2$ monomials we see that $x_1$ is the only possible first variable. We conclude that row $1$ contains at most one nonzero entry except $\alpha_{1,1}$. Consider the following cases:
\begin{enumerate}
    \item Row $1$ has exactly $1$ nonzero number except $\alpha_{1,1}$, for example $\alpha_{1,j}\not=0$. Note that $\alpha_{1,j}\leftrightarrow m_{1,j}=x_1^{d_1-\alpha_{1,1}}x_j^{d_1-\alpha_{1,j}}$. Then $x_j\in F(K)$, since we assumed that $F(K)$ has at least $2$ elements. Note that $x_j$ can not be a constrained variable. Indeed, assume the opposite. Note that we have at least $1$ minimal generator $m$ whose support consists of $s+1$ variables: $s$ constrained ones and $1$ unconstrained. If $x_j$ is constrained, then the $x_j$-exponent of $m$ is $d_j-\alpha_{1,j}-\ldots-\alpha_{j,j}<d_1-\alpha_{1,j}$, since $\alpha_{j,j}>0$ due to normality of our future table. This implies that $x_j\not\in F(K)$, a contradiction. Therefore, $x_j$ is not constrained. Note that the only monomials containing the unconstrained variable $x_j$ are monomials coming from column $j$. However, all such monomials will have strictly smaller $x_j$-exponents than $d_1-\alpha_{1,j}$. Therefore, all numbers below $\alpha_{1,j}$ (if they exist at all) have to be $0$. Monomials coming from other columns do not contain variable $x_j$ at all and therefore such minimal generators should not exist. This implies that the only minimal generator except pure powers is $m_{1,j}$. This implies that we only have $2$ variables (otherwise $S(K)$ is disconnected) and $1$ colour. We can then set $j=2$ and then the table is   
    \begin{figure}[H]
\centering
\begin{tikzpicture}[scale=2]
\draw[red,thick] (2,-1)--(1,0);  

\node [above, thin] at (1,0) {$d_1$};
\node [above, thin] at (2,0) {$d_2$};

\node [above, thin] at (1,-1) {$\alpha_{1,1}$};
\node [above, thin] at (2,-1) {$d_1$};
\foreach \x in {1,...,2}{
    \foreach \y in {-1,...,0}{
    \fill[fill=black] (\x,\y) circle (0.03 cm);
    }}

\end{tikzpicture}
\caption{}
\end{figure}
Note that in this case we have $F(K)=\{x_1,x_2\}$ and $d_1<d_2$.    
\item Row $1$ does not have nonzero numbers except $\alpha_{1,1}$. Then we have $s\ge 2$, therefore, at least one more constrained variable. Note that whichever variable comes after $x_1$, it has to belong to $F(K)$. Assume $x_2$ is the next constrained variable (again, we assume without loss of generality that $x_2\in F(K)$). Then our future table starts like this, with possibly more colours and constrained variables:

\begin{figure}[H]
\centering
\begin{tikzpicture}[scale=2]
\draw [red, thick] (1,0) -- (2,-1) -- (2,-2) -- (2,-1) -- (5,-1);
\draw [green, thick] (2,0) -- (2,-1) -- (3,-2) -- (5,-2);
 
\node [above, thin] at (1,0) {$d_1$};
\node [above, thin] at (2,0) {$d_2$};
\node [above, thin] at (3,0) {$\ldots$};
\node [above, thin] at (4,0) {$d_{n-1}$};
\node [above, thin] at (5,0) {$d_n$};
\node [above, thin] at (1,-1) {$\alpha_{1,1}$};
\node [above, thin] at (2,-1) {$0$};
\node [above, thin] at (3,-1) {$\ldots$};
\node [above, thin] at (4,-1) {$0$};
\node [above, thin] at (5,-1) {$0$};
\node [above, thin] at (1,-2) {$0$};
\node [above, thin] at (2,-2) {$d_1$};
\node [above, thin] at (3,-2) {$\ldots$};
\node [above, thin] at (4,-2) {$\alpha_{2,n-1}$};
\node [above, thin] at (5,-2) {$\alpha_{2,n}$};

\foreach \x in {1,...,2}{
    \foreach \y in {-2,...,0}{
    \fill[fill=black] (\x,\y) circle (0.03 cm);
    }}
\foreach \x in {4,...,5}{
    \foreach \y in {-2,...,0}{
    \fill[fill=black] (\x,\y) circle (0.03 cm);
    }}

\end{tikzpicture}
\caption{}

\end{figure}
Note that $d_1<d_2$. Now, if $F(K)$ has $2$ elements, the proof is complete. Note that in all the cases we had $x_1<x_2$. Now we assume $F(K)$ has at least $3$ elements and proceed in a similar way: row $2$ has at most one nonzero number except $\alpha_{2,2}=d_1$. Then this case will split into $2$ subcases:
\begin{enumerate}
    \item[2.1] Row $2$ has exactly $1$ nonzero number except $\alpha_{2,2}$, say, $\alpha_{2,j}\not=0$. Similarly to Case~1, we will conclude that we only have $3$ variables and set $j=3$. The table then looks like this: 
    
\begin{figure}[H]
\centering
\begin{tikzpicture}[scale=2]

\draw[red,thick]  (2,-2)--(2,-1)--(3,-1);
\draw[red,thick] (2,-1)--(1,0);  

\draw[green,thick]  (2,-1)--(3,-2)--(3,-2);
\draw[green,thick] (2,-1)--(2,0);

\node [above, thin] at (1,0) {$d_1$};
\node [above, thin] at (2,0) {$d_2$};
\node [above, thin] at (3,0) {$d_3$};
\node [above, thin] at (1,-1) {$\alpha_{1,1}$};
\node [above, thin] at (2,-1) {$0$};
\node [above, thin] at (3,-1) {$0$};
\node [above, thin] at (1,-2) {$0$};
\node [above, thin] at (2,-2) {$d_1$};
\node [above, thin] at (3,-2) {$d_2$};

    \foreach \x in {1,...,3}{
    \foreach \y in {-2,...,0}{
    \fill[fill=black] (\x,\y) circle (0.03 cm);
    }}

\end{tikzpicture}
\caption{}
\end{figure}  
Note that in this case we have $F(K)=\{x_1,x_2,x_3\}$ and $d_1<d_2<d_3$.
\item[2.2] Row $2$ does not have nonzero numbers except $\alpha_{2,2}$. This subcase will again split into $2$ further subcases. 
\end{enumerate}
\end{enumerate}
To summarize all of the above: in all the cases above we had the following: if $F(K)=\{x_{i_1},\ldots, x_{i_r}\}$, then the first variables of any table $T$ in a normal form with $K(T)=K$ are variables $x_{i_1},\ldots, x_{i_r}$, with some permutation. Furthermore, the permutation can be uniquely determined from $d_j$: if $d_{i_1}<d_{i_2}\ldots<d_{i_r}$, then the order of variables is exactly $x_{i_1}, x_{i_2},\ldots, x_{i_r}$. In particular, we can uniquely determine the true first variable among all the possible ones.  Note that equality between, say, $d_{i_1}$ and $d_{i_2}$, where $x_{i_1},x_{i_2}\in F(K)$ and $i_1\not=i_2$, is impossible because in this case the constructed $T$ is not in a normal form and splits into a disjoint union of tables, which gives a disconnected $S(K)$.
\end{proof}
\begin{remark}
\label{rem: induction}
Let $K=K(T)$, where $T$ is a (single) table in a normal form with the default order of variables. Let $J=K:(x_1^{d_1-\alpha_{1,1}})$, where $I_1:I_2$ denotes the ideal quotient. 

Then 
\begin{multline*}
J=(x_1^{\alpha_{1,1}},x_2^{d_2-\alpha_{2,1}},\ldots, x_n^{d_n-\alpha_{n,1}}, x_2^{d_2-a_{2,1}-a_{2,2}}(x_3^{d_3-a_{3,1}-a_{3,2}},\ldots,x_n^{d_n-a_{n,1}-a_{n,2}}),\\ \ldots, x_2^{d_2-a_{2,1}-a_{2,2}}\cdots x_s^{d_s-a_{s,1}- \cdots -a_{s,s}}(x_{s+1}^{d_{s+1}-a_{s+1,1}- \cdots -a_{s+1,s}},\ldots,x_n^{d_n-a_{n,1}- \cdots -a_{n,s}})). 
\end{multline*}
Now let $J'$ be obtained from $J$ by throwing away the first generator $x_1^{\alpha_{1,1}}$. Note that $J'$ is a table ideal in variables $x_2,\ldots, x_n$ coming from the following table:

\begin{figure}[H]

\centering
\begin{tikzpicture}[scale=1.4]
	 \draw [red,thick]     (2,-1) -- (9,-1);
      \draw[red,thick]    (2,-1) -- (2,-2);
      \draw [red,thick]  (2,-1)--(1,0);
      
      \draw[green, thick]  (2,-1)-- (3,-2) -- (9,-2);
      \draw[green, thick]  (3,-2) -- (3,-3);
      \draw[green,thick] (2,-1)--(2,0);

       \draw[cyan, thick]  (3,-1)-- (3,-2) -- (4,-3)--(9,-3);
      \draw[cyan, thick]  (4,-4)-- (4,-3);
      \draw[cyan, thick] (3,-1)--(3,0);

      \draw[yellow,thick]  (4,-1)-- (4,-3) --(5,-4)-- (9,-4);
      \draw[yellow,thick]  (5,-4)-- (5,-5);
      \draw[yellow,thick]  (4,-1)-- (4,0);
      
      \draw[blue,thick]  (5,-1)--(5,-4) --(6,-5)-- (9,-5);
      \draw[blue,thick]  (5,-1)--(5,-0);
      \draw[dashed]  (1.5,0.5)-- (1.5,-5.5);
    \foreach \x in {1,...,2}{
    \node [above, thin] at (\x,0) {$d_\x$};

    }
    \node [above, thin] at (2,-1) {$\alpha_{2,1}$};
    \node [above, thin] at (1,-1) {$\alpha_{1,1}$};
    \node [above, thin] at (1,-2) {$0$};
    \node [above, thin] at (2,-2) {$\alpha_{2,2}$};
    \node at (1.5,-5.5) {\rotatebox{90}{\ding{34}}};

    \node [above, thin] at (4,-1) {$\alpha_{s-1,1}$};
    \node [above, thin] at (5,-1) {$\alpha_{s,1}$};
    \node [above, thin] at (6,-1) {$\alpha_{s+1,1}$};
    \node [above, thin] at (8,-1) {$\alpha_{n-1,1}$};
    \node [above, thin] at (9,-1) {$\alpha_{n,1}$};
    
    \node [above, thin] at (4,-2) {$\alpha_{s-1,2}$};
    \node [above, thin] at (5,-2) {$\alpha_{s,2}$};
    \node [above, thin] at (6,-2) {$\alpha_{s+1,2}$};
    \node [above, thin] at (8,-2) {$\alpha_{n-1,2}$};
    \node [above, thin] at (9,-2) {$\alpha_{n,2}$};

    \node [above, thin] at (4,0) {$d_{s-1}$};    
    \node [above, thin] at (5,0) {$d_{s}$};  
    \node [above, thin] at (6,0) {$d_{s+1}$};  
    \node [above, thin] at (8,0) {$d_{n-1}$};
    \node [above, thin] at (9,0) {$d_{n}$};

    \node [above, thin] at (1,-4) {$0$};
    \node [above, thin] at (2,-4) {$0$};
    
    \node [above, thin] at (4,-4) {$\alpha_{s-1,s-1}$};
    
    \node [above, thin] at (5,-4) {$\alpha_{s,s-1}$};
    
    \node [above, thin] at (6,-4) {$\alpha_{s+1,s-1}$};
    \node [above, thin] at (8,-4) {$\alpha_{n-1,s-1}$};
    \node [above, thin] at (9,-4) {$\alpha_{n,s-1}$};

    \node [above, thin] at (1,-5) {$0$};
    \node [above, thin] at (2,-5) {$0$};
    \node [above, thin] at (4,-5) {$0$};
    \node [above, thin] at (5,-5) {$\alpha_{s,s}$};
    \node [above, thin] at (6,-5) {$\alpha_{s+1,s}$};
    \node [above, thin] at (8,-5) {$\alpha_{n-1,s}$};
    \node [above, thin] at (9,-5) {$\alpha_{n,s}$};
    
    \foreach \x in {1,...,9}{
    \node [above, thin] at (\x,-3) {$\ldots$};
    }
    \foreach \y in {-5,...,0}{
     \node [above, thin] at (3,\y) {$\ldots$};
     \node [above, thin] at (7,\y) {$\ldots$};
      }
      
       \node[cross,thin, scale=0.8] at (2,-1.5) {};
       \foreach \x in {2,...,8}{
       \node[cross,thin, scale=0.8] at (0.5+\x,-1) {};
       }
     
    \foreach \x in {1,...,2}{
    \foreach \y in {-2,...,0}{
    \fill[fill=black] (\x,\y) circle (0.03 cm);
    }}
    
    \foreach \x in {1,...,2}{
    \foreach \y in {-5,...,-4}{
    \fill[fill=black] (\x,\y) circle (0.03 cm);
    }}
    
    \foreach \x in {4,...,6}{
    \foreach \y in {-2,...,0}{
    \fill[fill=black] (\x,\y) circle (0.03 cm);
    }}
    \foreach \x in {4,...,6}{
    \foreach \y in {-5,...,-4}{
    \fill[fill=black] (\x,\y) circle (0.03 cm);
    }}
    
    \foreach \x in {8,...,9}{
    \foreach \y in {-2,...,0}{
    \fill[fill=black] (\x,\y) circle (0.03 cm);
    }}
    \foreach \x in {8,...,9}{
    \foreach \y in {-5,...,-4}{
    \fill[fill=black] (\x,\y) circle (0.03 cm);
    }}
\foreach \x in {2,...,9}{    
   
    \draw[dashed, -latex] (\x,-1) arc 
    (225:135:cos 45);
     }
     
\foreach \x in {2,...,9}{    
   
  \node [thin] at (\x-0.3,-0.3) {$_{-}$};
    }

\end{tikzpicture}

\end{figure}
We remark that table $T'$ obtained this way is not necessarily in a normal form. However, the only problem that might occur is an almost zero unconstrained column which then gets into a separate table, say, $T'=T''\sqcup T_{x_{i_1}}\ldots\sqcup T_{x_{i_r}}$, where $T''$ is a single table in a normal form and all the other tables are singletons in unconstrained variables $x_{i_1},\ldots, x_{i_r}$. This way we can get a (generalised) table in a normal form for $J'$. Note that passing from $T$ to $T''\sqcup T_{x_{i_1}}\ldots\sqcup T_{x_{i_r}}$ does not change the order of constrained variables: tables $T$ and $T''$ have the same sets and orders of constrained variables modulo the fact of $T$ having an extra $x_1$ in the beginning. Also note that $\dim(S(J))=\dim(S(J'))=\dim(S(J''))=\dim(S(K))-1$.
\end{remark}

\begin{theorem}
\label{thm:uniqueness}
Let $K$ be a table ideal. Then there exists a unique $T$ in a normal form such that $K=K(T)$.
\end{theorem}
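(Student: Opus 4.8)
The plan is to prove existence and uniqueness separately, with existence being essentially free and uniqueness proceeding by induction on $s:=\dim S(K)$. For existence, since $K$ is a table ideal we have $K=K(T_0)$ for some generalised table $T_0$, and \Cref{reduction} lets us reduce $T_0$ to a normal form without changing the ideal. For uniqueness I would first reduce to the connected case: by \Cref{complexthm} any normal form generalised table $T$ with $K(T)=K$ decomposes as a disjoint union indexed by the connected components of $S(K)$, with the variables of each piece prescribed by the corresponding component (and the minimal generators of $K$ splitting accordingly by their supports), so it suffices to treat connected $S(K)$. If $\dim S(K)=0$ then $K=(x_i^{d})$ is a single pure power and the only normal form is the $(0,1)$-table with entry $d$. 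So assume $S(K)$ is connected with $\dim S(K)=s\ge 1$, and as inductive hypothesis that uniqueness holds for every table ideal whose complex has dimension $<s$.

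Next I would pin down the entire first column and first row of any normal form table $T$ for $K$ in terms of $K$ alone. By \Cref{lemma: unique first} every such $T$ has the same first constrained variable, say $x_1$ after relabelling. For each variable $x_j$ occurring in $S(K)$, by \Cref{minimalgens} and \Cref{nicecomplex} the monomial $x_j^{d_j}=m_{0,j}$ is the unique minimal generator of $K$ with support $\{j\}$, so every $d_j$ is determined; in particular $d_1$ is. Since $x_1\in F(K)$ and (because $s\ge 1$) at least one non-pure-power minimal generator exists, all such generators share a common $x_1$-exponent, which equals $d_1-\alpha_{1,1}$ and hence determines $\alpha_{1,1}\ne 0$. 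Finally, for $j\ge 2$, by \Cref{rmksupp} the only possible minimal generator of $K$ with support $\{1,j\}$ is $m_{1,j}=x_1^{d_1-\alpha_{1,1}}x_j^{d_j-\alpha_{1,j}}$; by \Cref{minimalgens} it occurs precisely when $\alpha_{1,j}\ne 0$, and when it does its $x_j$-exponent together with the known $d_j$ determines $\alpha_{1,j}$, while otherwise $\alpha_{1,j}=0$. Thus the whole top row $(d_j)_j$ and the whole first row $(\alpha_{1,j})_j$ of $T$ are functions of $K$.

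Then I would recurse using \Cref{rem: induction}. Put $J=K:(x_1^{d_1-\alpha_{1,1}})$ and let $J'$ be $J$ with its single minimal generator $x_1^{\alpha_{1,1}}$ removed; by \Cref{rem: induction}, $J'$ is a table ideal in $x_2,\dots,x_n$, it depends only on $K$ (since $x_1,d_1,\alpha_{1,1}$ do), and $\dim S(J')=s-1$. By the inductive hypothesis $J'$ has a unique normal form $\widetilde T$. On the one hand, \Cref{rem: induction} shows that any normal form $T$ for $K$ with first variable $x_1$, after deleting its first row, replacing each $d_j$ by $d_j-\alpha_{1,j}$, and splitting off the resulting almost-zero unconstrained columns as singletons, yields a normal form for $J'$, hence equals $\widetilde T$. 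On the other hand, $\widetilde T$ together with the data $d_1,\alpha_{1,1},(\alpha_{1,j})_{j\ge 2}$ reconstructs $T$ uniquely: the column of $x_1$ is $(d_1,\alpha_{1,1},0,\dots,0)$; for $j\ge 2$ the column of $x_j$ in $T$ has top entry $d_j'+\alpha_{1,j}$, where $d_j'$ is the value of $x_j$ in $\widetilde T$ (whether $x_j$ sits in the positive-dimensional table of $\widetilde T$ or among its singletons), first-row entry $\alpha_{1,j}$, and lower entries copied verbatim from $\widetilde T$ (all zero if $x_j$ is a singleton there); and the constrained variables of $T$ are $x_1$ followed by those of $\widetilde T$ in the same order. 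Hence any two normal form tables for $K$ coincide, which closes the induction.

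I expect the crux to be this last step: making precise that the passage $K\mapsto J'$ is genuinely reversible. One must check that $J'$ is intrinsic to $K$ — which rests on \Cref{lemma: unique first} and the weight-$1$ assertions of \Cref{nicecomplex} making $x_1$, $d_1$ and $\alpha_{1,1}$ canonical — and, more delicately, that the splitting off of almost-zero unconstrained columns in forming $\widetilde T$ discards no information: from $\widetilde T$ and the recorded first-row data one must be able to tell exactly which columns were split off and restore each of their full columns. Here one uses that, by \Cref{infotable}, the positive-dimensional part of $S(J')$ is a single component (the one containing the second constrained variable), so there is an unambiguous "main" table inside $\widetilde T$ to re-attach $x_1$ to, the remaining components being $(0,1)$-singletons. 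The remaining pieces — the base case, the reduction to connected $S(K)$, and reading off $d_1,\alpha_{1,1},\alpha_{1,j}$ — are routine given the results already proved.
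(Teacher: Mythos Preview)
Your proof is correct and follows essentially the same inductive strategy as the paper: pin down the first constrained variable via \Cref{lemma: unique first}, then pass to the smaller ideal $J'$ via the colon-ideal construction of \Cref{rem: induction} and recurse. Your write-up is more explicit than the paper's---you treat existence, the base case, the reduction to connected $S(K)$, and the full reconstruction of $T$ from $\widetilde T$ and the first-row data, whereas the paper only argues that the \emph{order} of constrained variables is uniquely determined and then asserts that the remaining entries can be filled in uniquely; one minor point is that the structural claim ``$\widetilde T$ is one main table plus singletons'' is really a consequence of \Cref{rem: induction} (together with \Cref{complexthm}) rather than of \Cref{infotable}.
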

\begin{proof}
Recall that we have restricted to table ideals with $S(K)$ connected and $\dim (S(K))=s\ge 1$. 
First of all note that it is enough to show that the order of constrained variables is uniquely determined. After the order is determined, there is a unique way to fill in the entries into the future table. We already know that the first constrained variable is uniquely determined from the ideal. If $s=1$, we are done since there are no other constrained variables. So we will assume $s\ge 2$. Without loss of generality, $x_1$ is the first constrained variable. We can find the values for $d_1$ and $d_1-\alpha_{1,1}$ from the common $x_1$-exponent of all the generators except pure powers. Let $J$ and $J'$ be as in \Cref{rem: induction}. Then from \Cref{rem: induction} we know that $S(J')$ is a union of a connected simplicial complex with a set (possibly empty) of isolated vertices. Having isolated vertices $x_{i_1},\ldots, x_{i_r}$ means that the only minimal generator of $J'$ containing such a variable is the corresponding pure power. Let $J''$ be obtained by removing these lonely variables from the minimal generators of $J'$. Then from \Cref{rem: induction} we know that $J''$ comes from a table in a normal form (equivalently, $S(J'')$ is connected) and satisfies $\dim(S(J''))=\dim(S(K))-1=s-1\ge 1$, which by \Cref{lemma: unique first} implies that there is a unique variable (without loss of generality, $x_2$) which can possibly start a table for $J''$. In particular, $x_2$ starts table $T''$, obtained from some $T$ such that $K=K(T)$. This means, the second variable of any such $T$ is uniquely determined. Then we proceed by induction.
\end{proof}
\section{Generating our data set}\label{sec:data}

How does one represent the data and what is the input form in the machine learning model are important questions to answer when using machine learning algorithms. In this section we discuss our choice of presenting the ideals in data form and how they are generated. We need to take care of generating enough table and enough non-table ideals. 

A monomial ideal $I\subset R$ is represented via the exponent vectors of the monomials. Each monomial in the ideal then gives a vector of length $n$, where $n$ is the number of the variables. We then concatenate the vectors into a single big vector. We call this vector the \emph{ideal vector}.
Concatenation gives different vectors depending on the order the exponent vectors are added, thus we first order the exponent vectors with reverse lexicographic order and then concatenate to have consistency in generating the data. 

The final issue present in this chosen representation is the length of the ideal vectors. The length is entirely dependent on the number of generators of the ideal. One could deal with this kind of data through recursive neural networks for example, but our choices of simple models require the vectors to be of the same length. Therefore we attach zeros in the beginning of the vector to get standard length. We note that when working with 10 or less variables this does not pose an issue, but attempting to have, for example, 27 variables, gives vectors of length 15000. We fixed the length in our test data for each dimension as showed in Table \ref{tab:vlength}.

\begin{table}[H]
    \centering
    \begin{tabular}{||c|c||}
    
      \toprule  Number of variables & Vector length without  the label \\
       \midrule 3& 45\\
       4& 92\\
       5& 180\\
       6& 258\\
       7& 511\\
       8& 624\\
       9& 810\\
       10& 1070\\
       \bottomrule
    \end{tabular}
    \caption{Fixed vector length for ideals between 3 and 10 variables.}
    \label{tab:vlength}
\end{table}

\begin{example}
Let $R=k[x,y,z]$ and let $I=(x^2y,y^3z^2,x^7z)$ be an ideal of $R$. The exponent vectors of the generators of $I$ are $(2,1,0),(0,3,2)$ and $(7,0,1)$. 
\end{example}

The rest of the section describes the families we generate. The code for generating these can be found in \cite{GithubCode}.

\subsection{Generating table ideals}

Given integers $s$ and $n$ with $0\leq s <n$, we generate a (pseudo) random $(n,s)$-table, with maximum free entry bounded by some $N_{max}$. The bound on the free entries is entire due to the requirement of random integer generation requiring bounds. 
From this table we then define an ideal following the definition of generators of table ideals.

\begin{algorithm}[H]
\SetAlgoLined
\KwResult{Exponent vectors of a table ideal generators }
 Use random number generator to generate a table $T$ of size s,n\;
 SET I TO []\;
SET s TO len(T)-1\;
SET n TO len(T[0])\;
SET M TO T.copy()\;
SET mono TO [0 FOR j IN range(n)]\;
 \For{ $i$ IN range($s$)}{
  \If{$i > 0$}{
  SET M[0] TO deduct(M[0], M[i])\;
         SET mono[i-1] TO M[0][i-1]\;
   }
   \For{j IN range(i,n)}{
    SET m TO mono.copy()\;
         SET m[j] TO M[0][j]\;
         I.append(m)\;
   }
 }
 \caption{Generating table ideals}
\end{algorithm}

The main drawback of our code for table ideals is that we cannot guarantee a minimal generating set for the ideal. We use the non-minimal generators for our experiments, but recognise that this could make it easier for the model to learn the pattern than in the case where we would reduce them to minimal generators.

\subsection{Generating non-table ideals}
Table ideals are a restrictive class of monomial ideals, thus most monomial ideals are not table ideals.
It is important to have a wide collection of non-table ideals to ensure that the machine learning algorithms that we train "see" enough variety of non-table ideals and do not get a biased idea of what a non-table ideal is.

We use three families of non-table ideals to get a wide selection of training data. These are table permutations, Macaulay's inverse systems, and a class of ideals from \cite{juhnke2018}. Inverse systems provide us with non-symmetric examples whereas the other two are symmetric.

\section{First approach: Feedforward neural networks}
\label{sec:ML1}

In this section we describe our first ML approach to the classification of table and non-table ideals, which is based on a feedforward neural network.
Artificial neural networks come in many forms, and feedforward networks are one of the simplest and most common ones. A \textit{feedforward neural network} is an artificial neural network (ANN) in which there are no cycles.
It is based on having single input and single output, and the internal layers consist of a linear transformation and a nonlinear component. For a more detailed introduction to ANN see for example \cite{Jung} Section 3.11.

\subsection{Sequential model with ReLU and Sigmoid activation functions}

We first use a neural network with 1 hidden layer and one output layer. The structure of the network is presented in Figure \ref{fig:my_label}. The aim for this network is to learn the label on the ideal from the input, i.e. from the ideal vector. Over the training, the neural network tries to estimate the function from $\mathbb{R}^N$ to $\{0,1\}$, where $N$ is the size of the input vector, as a piece-wise linear function.  
We chose to use the \textit{binary cross entropy} as a loss function, since the task is binary classification, 
and we used Adam \cite{kingma2017adam} as the optimiser, since it is a well performing stochastic gradient descent optimiser that works in most cases.

\tikzset{%
  every neuron/.style={
    circle,
    draw,
    minimum size=1cm
  },
  neuron missing/.style={
    draw=none, 
    scale=4,
    text height=0.333cm,
    execute at begin node=\color{black}$\vdots$
  },
}

\begin{figure}[H]
\begin{center}
\begin{tikzpicture}[x=1.5cm, y=1.5cm, >=stealth]

\foreach \m/\l [count=\y] in {1,2,3,missing,4}
  \node [every neuron/.try, neuron \m/.try] (input-\m) at (0,2.5-\y) {};

\foreach \m [count=\y] in {1,missing,2}
  \node [every neuron/.try, neuron \m/.try ] (hidden-\m) at (2,2-\y*1.25) {};

\foreach \m [count=\y] in {1}
  \node [every neuron/.try, neuron \m/.try ] (output-\m) at (4,0.5-\y) {};

\foreach \l [count=\i] in {1,2,3,N}
  \draw [<-] (input-\i) -- ++(-1,0)
    node [above, midway] {$I_\l$};

\foreach \l [count=\i] in {1,N}
  \node [above] at (hidden-\i.north) {$H_\l$};

\foreach \l [count=\i] in {1}
  \draw [->] (output-\i) -- ++(1,0)
    node [above, midway] {$O_\l$};

\foreach \i in {1,...,4}
  \foreach \j in {1,...,2}
    \draw [->] (input-\i) -- (hidden-\j);

\foreach \i in {1,...,2}
  \foreach \j in {1}
    \draw [->] (hidden-\i) -- (output-\j);

\foreach \l [count=\x from 0] in {Input, Hidden, Ouput}
  \node [align=center, above] at (\x*2,2) {\l \\ layer};

\end{tikzpicture}
\caption{2-layer neural network}
\label{fig:my_label}
\end{center}
\end{figure}
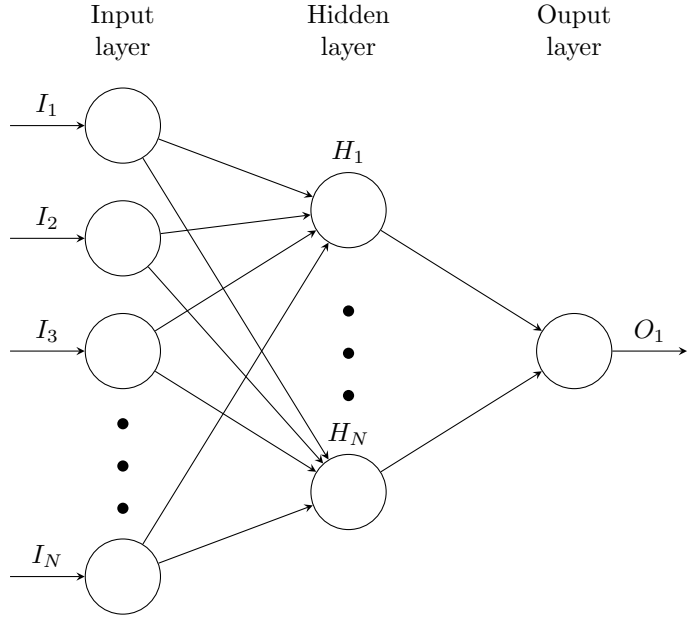

The parameters in this neural network are the entries of the two matrices in the layers, and they are updated on each training round with the gradient descent.

\subsection{Results}
For each number of variables $n\in \{3,\ldots,10\}$, we trained a feed forward neural network.
The data used for this training consisted of all the types in Section \ref{sec:data}, except the almost table ideals. For all cases, the obtained binary accuracy of the model was 1 on both the training set and the test set. The graphs of loss and accuracy from training the $n=5$ model are presented in Figure \ref{fig:train1}. This level accuracy is a bit suspicious, and thus we followed up by testing specific examples that were expected to fail with the trained model.
\begin{figure}[ht]
\begin{center}
    \includegraphics[scale=.5]{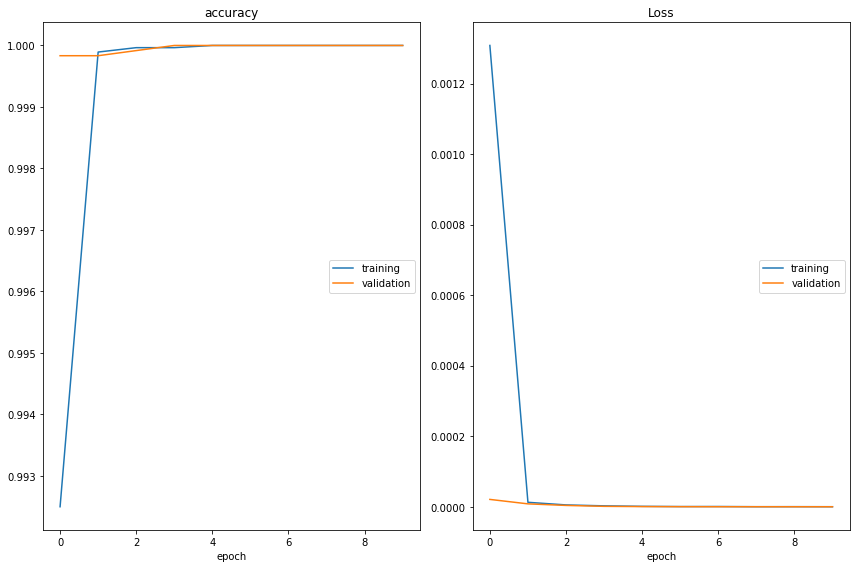}
    \caption{The accuracy and loss graphs from training ANN for 5 variable ideals for 10 epochs. The training values are measured from the training set that the model sees while learning and the validation is from the test set the model does not see during training.}
    \label{fig:train1}
    \end{center}
\end{figure}

Firstly, we added the almost-table ideals. The accuracy of the model dropped to 50\% with all $n$ when trained the same amount of time as the previous models. To try to improve the performance we trained the neural network for a longer time, graphs of this are presented in Figure \ref{fig:train2}. When training for a longer time we see a clear overfitting happening: the training set is being learnt and has a high accuracy but the test set stagnates at the accuracy of 50\%.

\begin{figure}[ht]
    \begin{center}
    \includegraphics[scale=0.5]{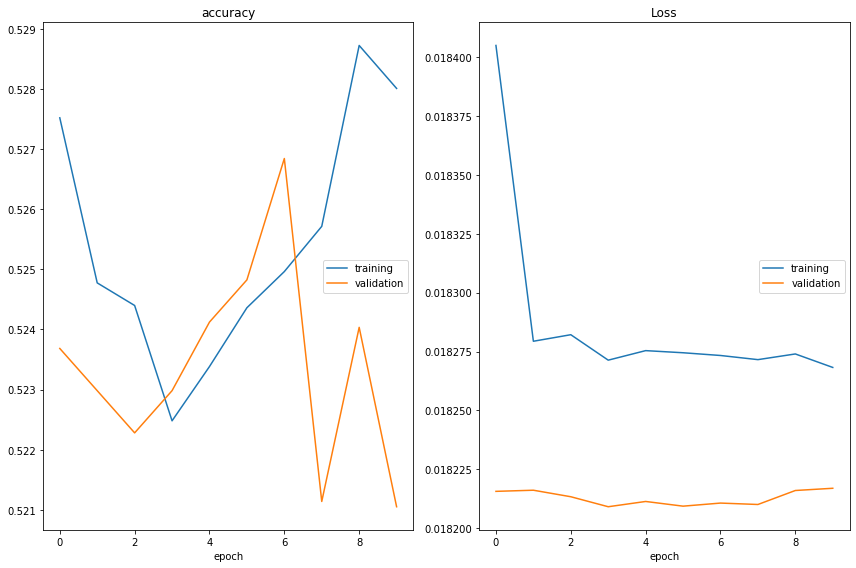}
    \caption{Training 10 var almost table vs table for short time}
    \label{t}
    \end{center}
\end{figure}

\begin{figure}[ht]
    \begin{center}
   \includegraphics[scale=0.5]{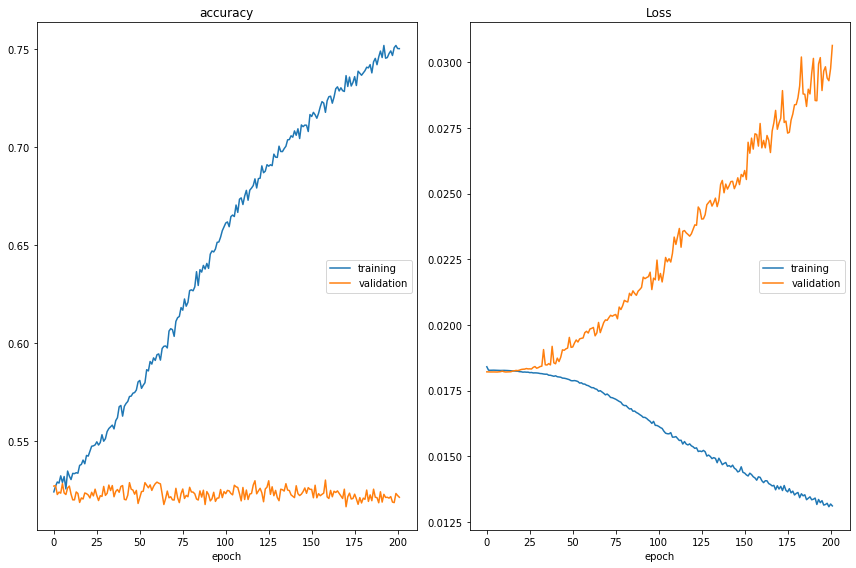}
    \caption{Training 10 variables for 1000 epochs on two layer NN}
    \label{fig:train2}
    \end{center}
\end{figure}

\section{The decision tree}
\label{sec:ML2}
Next we want to apply the decision tree model to our table ideal data. In machine learning, decision trees are non-parametric models that recursively partition the data such that vectors with same labels are grouped together. We use the Scikit-Learn decision tree \cite{scikit-learn}, and the documentation for the decision tree contains also a detailed explanation of the mathematics behind it. 

In simple terms, the partitioning of the data is done by checking each entry in the vector for splitting with respect to some value $t$ and then the one giving  split closest to the actual labels is chosen. The quality of each candidate split is computed using the chosen loss function of the decision tree. 
In our case we used the default loss function that is the Gini loss function. 

The advantage of using a decision tree is that we can see where the data is split and can try to understand the model through these clear splits. Moreover, if the trees are small enough they can be visualised, examples of this are Figure \ref{fig:tree5} and the trees in the Appendix \ref{app:dt_graph}.

Depending on the data, the decision trees can grow to be huge and even include nodes that are not necessary. Thus it is common practice to limit the maximum number of leaves, depth, or other components to keep the tree to a manageable size. In our experiments this was not needed as the trees stayed naturally small.
\begin{figure}[ht]
    \begin{center}
    \includegraphics[scale=0.3]{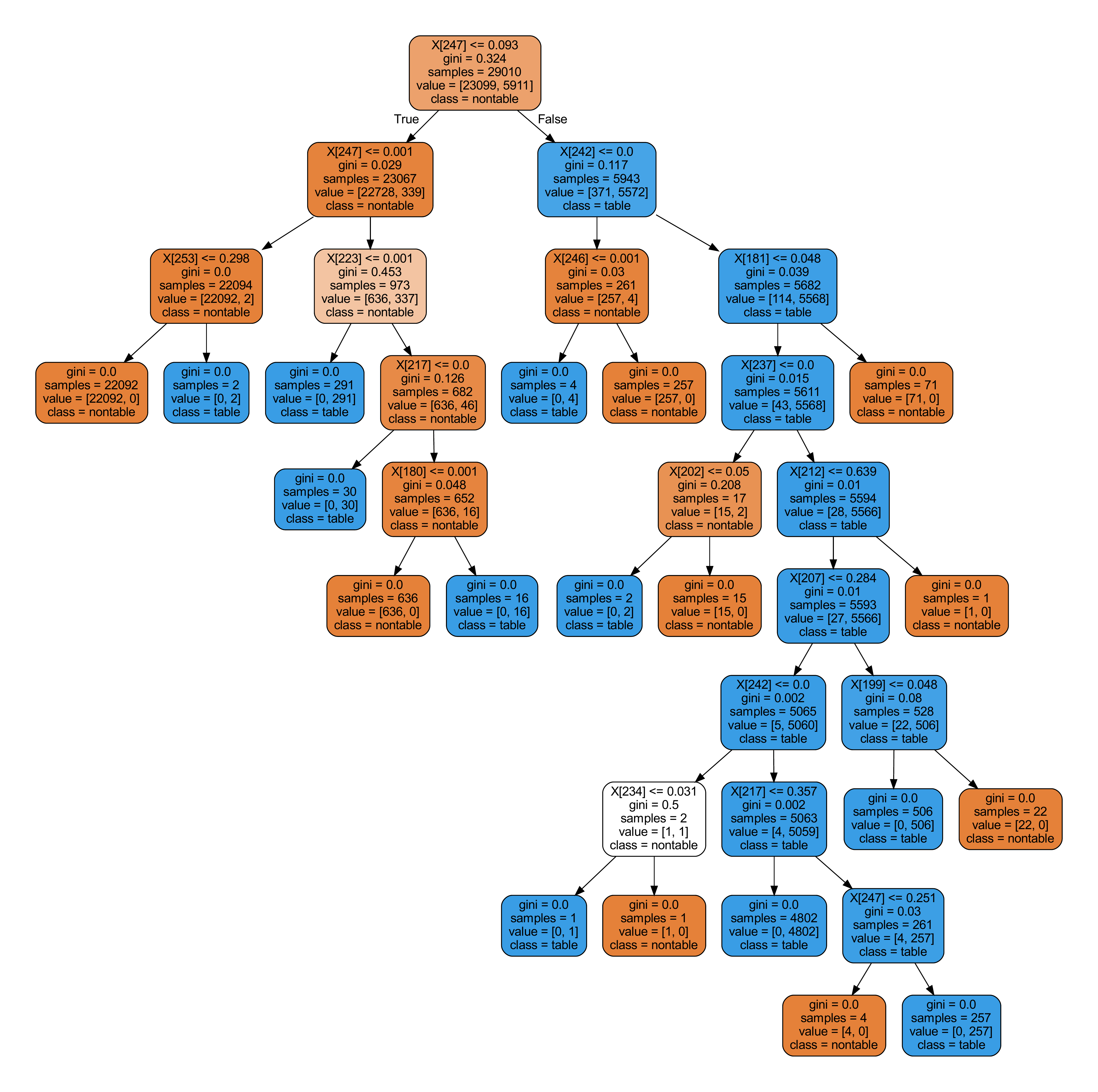}
    \caption{A decision tree of 10 variables}
    \label{fig:tree5}
    \end{center}
\end{figure}

\subsection{Experiments on table ideal data}
We used the same tree model for all number of variables, and let the training process run freely without constraints on the maximum depth, number of leaves, etc.

Decision trees are known to be unstable, so slightly different data gives a different tree. To combat this we computed the average tree over 100 iterations. The average number of nodes, depth, number of leaves and total errors are presented in Table \ref{table:avtree} for 3 to 10 variables.
The average results in Table \ref{table:avtree} show very little variance between the number of variables, which suggest to us that the process of distinguishing table ideals from non-table ideals via the decision tree process is similar to all number of variables. 

 Moreover, looking at the explicit figures of the trees, the first split is fixed and further down the trees often check same points. Since we ordered the monomials in the ideals, the first check corresponds roughly to checking the monomial $x^ay^b$ where $a$ and $b$ are some integers. These observations lead us to conjecture that there exists an algortihmic way to check whether an ideal is a table ideal or not. 
\begin{claim}
There exists an algorithm that can check if a given monomial ideal is a table ideal. 
\end{claim}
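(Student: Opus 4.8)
\emph{Proof proposal.} The plan is to convert the structural results of \Cref{sec:uniqueness} into an explicit decision procedure whose correctness rests on the uniqueness of the normal form (\Cref{thm:uniqueness}).

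\textbf{Step 1: reduction to the connected case.} Given a monomial ideal $K\subseteq\mathbb{K}[x_1,\ldots,x_n]$, I would first compute its (unique) minimal monomial generating set, a routine finite computation. If $K=\mathbb{K}[x_1,\ldots,x_n]$ then $K$ is a table ideal (take a table with some $d_j=0$) and we stop, so assume $K$ is proper and build the weighted simplicial complex $S(K)$. Since a generalised table is a disjoint union of tables on disjoint variable sets, and since — by \Cref{complexthm} together with the observation in its proof that variables in different components of a normal-form table land in different components of $S(K)$ — the component structure of any normal-form representative of $K$ is forced to match that of $S(K)$, the ideal $K$ is a table ideal if and only if, for every connected component of $S(K)$, the ideal generated by the minimal generators supported on its vertices is a single-table table ideal. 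Thus it suffices to decide the connected case. A component that is a single vertex $\{x_j\}$ corresponds to $(x_j^{d_j})$, which is always a table ideal (from a $(0,1)$-table), so we may further assume $\dim S(K)=s\ge 1$.

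\textbf{Step 2: a finite search settling the claim.} In the connected case with $\dim S(K)=s\ge1$, every candidate normal-form table $T$ with $K(T)=K$ has \emph{a priori bounded} data. The exponents $d_1,\ldots,d_n$ are forced: by \Cref{minimalgens} each pure power $x_j^{d_j}=m_{0,j}$ must occur among the minimal generators of $K$ (since $d_j\ne 0$ for a proper table), so if some variable has no pure power among the minimal generators then $K$ is not a table ideal and we stop; otherwise $d_j$ is read off directly. The number of colours is pinned down by $\dim S(K)=s$ (\Cref{infotable}), and the table axioms force $0\le\alpha_{i,j}$ and $\sum_{i=1}^{s}\alpha_{i,j}\le d_j$, so every entry is at most $\max_j d_j$. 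With fewer than $n$ rows and at most $n!$ orderings of the constrained columns, only finitely many candidate tables remain; I would list them all, form $K(T)$ for each from the definition, and test the monomial-ideal equality $K(T)=K$. By \Cref{thm:uniqueness} the search succeeds precisely when $K$ is a table ideal, which already proves the claim.

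\textbf{Step 3: an efficient variant, and the main difficulty.} To avoid the combinatorial blow-up one can instead follow the proof of \Cref{thm:uniqueness} line by line: use \Cref{lemma: unique first} to compute the set of possible first variables, aborting if it is empty, and select the candidate first constrained variable $x_{i_1}$ as the one with smallest pure-power exponent; recover $d_{i_1}$ and $\alpha_{1,1}$ from the common $x_{i_1}$-exponent of the non-pure-power generators; pass to $J'=\big(K:x_{i_1}^{\,d_{i_1}-\alpha_{1,1}}\big)$ with the generator $x_{i_1}^{\alpha_{1,1}}$ removed as in \Cref{rem: induction}; strip off any isolated vertices; and recurse on the connected remainder, whose simplicial complex has dimension $s-1$. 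After $s$ rounds this determines the full order of constrained columns, hence a single candidate table, which one rebuilds and checks against $K$ — in particular verifying table axiom~$(iii)$, which is not automatic. The hard part here is purely the bookkeeping: correctly detecting and peeling off the "unconstrained almost zero" columns that split into singleton tables, treating the small base cases that appear inside \Cref{lemma: unique first} (two or three variables), and ensuring the intermediate ideals stay in the normal-form setting to which \Cref{lemma: unique first} and \Cref{thm:uniqueness} apply. None of this is deep, but making the case analysis exhaustive is where the real work lies; the finite search of Step~2 sidesteps it entirely at the cost of efficiency.
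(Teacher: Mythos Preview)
Your proposal is correct, and your Step~3 is exactly the paper's algorithm in \Cref{sec:alg}: inductively determine the order of constrained variables via \Cref{lemma: unique first} and \Cref{rem: induction}, fill in the entries row by row, and then verify the table axioms. Your Step~2 (the bounded brute-force search over candidate normal-form tables) does not appear in the paper but is a perfectly valid, conceptually cleaner proof of the bare existence claim, trading away all efficiency in exchange for avoiding the case analysis you flag in Step~3.
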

This assumption turns out to be true.

\begin{table}
\begin{center}
\begin{tabular}{|ccccc|}
\toprule    n &  number of nodes &  depth &  number of leaves &  total errors \\
\midrule    3 &            45.40 &   9.90 &             23.20 &          7.44 \\
  4 &            31.32 &   7.97 &             16.16 &           6.3 \\
 5 &            44.60 &   8.01 &             22.80 &          8.62 \\
  6 &            48.86 &   9.50 &             24.93 &         10.69 \\
  7 &            55.44 &  12.04 &             28.22 &         14.02 \\
    8 &            48.62 &  11.54 &             24.81 &         12.59 \\
   9 &            40.76 &   9.99 &             20.88 &         11.28 \\
  10 &            36.28 &   8.95 &             18.64 &          9.94 \\
 
 \bottomrule
\end{tabular}
    \caption{Summary of different trees average nodes, depth, number of leaves and total errors}
   \label{table:avtree}
\end{center}
\end{table}

\section{Graph neural networks}
\label{sec:ML3}

Graph neural networks (GNNs) can be thought of as a type of recurrent neural networks where the input data is circulated multiple times through the same subnetwork. The obvious difference to the other models explored in this paper is that GNN's input is a graph. To use GNNs we need to first manually define a graph for the monomial ideals, but the benefit of using graphs is that we do not need to limit the number of generators in the ideals.

For every ideal $I$ we define a graph as follows: let $m_1,m_2,\ldots,m_r$ be the generating set of monomials of the ideal, then the graph $G$ will have $r$ vertices, one corresponding to each monomial, and we label these vertices with the exponent vector of the monomial. We have an edge between the vertices corresponding to $m_i$ and $m_j$ if the support of $m_i$ is contained in the support of $m_j$. See Figure \ref{fig:graph_ex} for an example of such a graph.

\begin{figure}[ht]
    \begin{center}
    \includegraphics[scale=0.5]{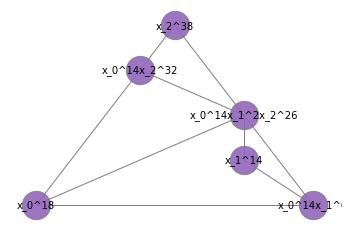}
    \caption{The graph associated to the ideal $I=(x_0^{18},x_1^{14},x_2^{38},x_0^{14}x_2^{32},x_0^{14}x_1^{2}x_2^{26},x_0^{14}x_1^6) $.}
    \label{fig:graph_ex}
    \end{center}
\end{figure}

The graph neural network that we are interested in is the graph classification one. For a more detailed review on graph neural networks see \cite{gnns}. For the code of the GNN we use PyTorch Geometric \cite{pt_geom}. The GNN we use takes as an input the graph described above and has layer of updating the labels, also known as message passing. In the message passing the network looks at the neighbours of a vertex and updates the labels based on the labels of the neighbours. There are many ways to choose the message passing function, i.e. how we update labels based on the information from neighbouring nodes. We used the method known as graph convolutional network \cite{GNN_conv}, where the update function for a label $x_i$ on vertex $v_i$ is 
$$x_i'=\Theta\sum_{j \in N(v_i)\cup \{i\}}\frac{e_{j,i}}{\sqrt{\hat{d_j}\hat{d_i}}}x_j$$
where $\hat{d_i}=1+\sum_{j\in N(v_i)}e_{i,j}$ and $e_i,j$ denotes the edge weight from $v_j$ to $v_i$ that is by default 1, and $\Theta$ denotes the filter matrix that is learnt by the neural network. 
At this stage the GNN has produced a graph with new labels and to do classification we need to go from a labelled graph to a vector. To do this we used the average label from the graph and then had a linear layer to produce the output in the right form to classify the results.

\subsection{Results from GNNs}
We ran the same ideal data experiments on GNN as we did on the previous neural networks. The GNN achieved the same accuracy of 100\% as the others on the crude data set, i.e. where we did not include almost table ideals. On the tests with almost table and table ideals it had an accuracy of 50\%, just as the other NN did. The training graphs are presented in Figures and , respectively.

On the data set that did not contain almost table ideals we see again high accuracy in both training and test sets after just few rounds of training the model. This suggest that graph based approach is efficient in picking up the \say{shape} of the ideal. Looking at the graphs in Figure , we see that introducing the almost table ideals again drops the accuracy and further training does not improve the situation. 

\section{An algorithm to classify table vs non-table ideals}
\label{sec:alg}
Here we present an algorithm which determines whether a given monomial ideal $K$ given by its unique minimal generating set is a table ideal in a suitable ring, and produces a proper generalised table in a normal form in case the answer is positive. First of all, we will only consider Artinian ideals $K$, that is, ideals such that among the minimal generators we have pure powers of all the variables. Clearly, $K$ can not be a table ideal otherwise. Also, without loss of generality we will assume that $S(K)$ is connected, otherwise we treat each connected component separately.
\begin{enumerate}

\item Determine the size of a future table, the set of constrained variables and inductively determine their order. Recall that the order on unconstrained variables does not matter. If at some point determining the next variable is impossible, the ideal is not a table ideal. Without loss of generality, the order of constrained variables is the default order. Note that if the order is different, it is easier to rename variables $x_{i_k}\rightarrow y_k$ and change the whole ideal $K$ than to keep track of the permutation on $x_i$.
\item Fill in the numbers into a matrix row by row. If we fill them in this order, then each $m_{i,j}$ will allow us to compute $\alpha_{1,1}, \ldots, \alpha_{i,i}$ and $\alpha_{i,j}$. If a generator with a particular support is missing, we put a zero into the matrix. 
\item When the matrix is filled, check whether the table conditions are satisfied.
\end{enumerate}


\begin{example}
Let $K=(x_1^4, x_2^5, x_3^6, x_1^2x_2, x_1^3x_3$. Then:
\begin{enumerate}
\item The table, if it existed, would have $1$ colour and $3$ variables. However, $F(K)=\emptyset$ and thus $K$ is not a table ideal. 
\end{enumerate}
\end{example}
\begin{example}
Let $K=(x_1^9, x_2^5, x_3^7, x_1^6x_2^2, x_1^ax_2^2x_3)$, where $0<a<6$. Then:
\begin{enumerate}
\item Our future table has to have $2$ colours and $3$ variables. Clearly, $F(K)=\{x_2\}$. Therefore, $x_2$ is the first constrained variable. Now, $K:(x_2^2)=(x_1^6,x_2^3,x_3^7,x_1^ax_3)$. Removing all the pure powers of lonely variables, we will get $K_1=(x_1^6,x_3^7,x_1^ax_3)$. Now we have $F(K_1)=\{x_1,x_3\}$, but we choose $x_1$ as the next constrained variable since the $x_1$-pure power has exponent $6$ and $x_3$-pure power has exponent $7$ and $6<7$. Note that if these exponents were equal, we would conclude that the ideal is not a table ideal.
\item We now know the order on constrained variables. The ideal under $x_2\rightarrow y_1$, $x_1\rightarrow y_2$, $x_3\rightarrow y_3$ becomes $K'=(y_1^5,y_2^9,y_3^7,y_1^2y_2^6,y_1^2y_2^ay_3)$  and the corresponding matrix will look like this:
\begin{figure}[H]
\centering
\begin{tikzpicture}[scale=2]
 
\draw[red, dashed] (1,0) -- (2,-1) -- (2,-2) -- (2,-1) -- (3,-1);
\draw[green, dashed] (2,0) -- (2,-1) -- (3,-2);
\node [above, thin] at (1,0) {$5$};
\node [above, thin] at (2,0) {$9$};
\node [above, thin] at (3,0) {$7$};
\node [above, thin] at (1,-1) {$3$};
\node [above, thin] at (2,-1) {$3$};
\node [above, thin] at (3,-1) {$0$};
\node [above, thin] at (1,-2) {$0$};
\node [above, thin] at (2,-2) {$6-a$};
\node [above, thin] at (3,-2) {$6$};
\foreach \x in {1,...,3}{
    \foreach \y in {-2,...,0}{
    \fill[fill=black] (\x,\y) circle (0.03 cm);
    }}

\end{tikzpicture}
\caption{}
\end{figure}
This is a table if and only if $a=4$.  

\end{enumerate}
\end{example}

\begin{example}
Let $K=(x_1^2,x_2^4,x_3^7,x_4^8,x_5^{10}, x_1x_2^2x_3^3x_4x_5^2)$. Then:
\begin{enumerate}
\item Our table has to have $4$ colours and $5$ variables. Now, 
$F(K)=\{x_1,x_2,x_3,x_4,x_5\}$, but $x_1$ has the smallest exponent in the corresponding pure power. Then $K:(x_1)=(x_1,x_2^4,x_3^7,x_4^8,x_5^{10}, x_2^2x_3^3x_4x_5^2)$. Removing the isolated variable $x_1$, we will get $K_1=(x_2^4,x_3^7,x_4^8,x_5^{10}, x_2^2x_3^3x_4x_5^2)$. Now, $F(K_1)=\{x_2,x_3,x_4,x_5\}$, but $x_2$ has the smallest exponent in the corresponding pure power. Then $K_1:(x_2^2)=(x_2^2,x_3^7,x_4^8,x_5^{10}, x_3^3x_4x_5^2)$. Removing the isolated variable $x_2$, we get $K_2=(x_3^7,x_4^8,x_5^{10}, x_3^3x_4x_5^2)$. Now,
$F(K_2)=\{x_3,x_4,x_5\}$, but $x_3$ has the smallest exponent in the corresponding pure power. Then $K_2:(x_3^3)=(x_3^4,x_4^8,x_5^{10},x_4x_5^2)$. Removing the isolated variable $x_3$, we get $K_3=(x_4^8,x_5^{10},x_4x_5^2)$. Finally, $F(K_3)=\{x_4,x_5\}$, but $x_4$ has the smallest exponent in the corresponding pure power. We conclude that the order of variables is the default order. We could have concluded it already noting that $F(K)=\{x_1,x_2,x_3,x_4,x_5\}$ and that $2<4<7<8<10$.

\item We will fill in the future table row by row. Since we have no monomials with signatures $(1,j)$, we have $\alpha_{1,2}=\alpha_{1,3}=\alpha_{1,4}=\alpha_{1,5}=0$. We do not yet know what $\alpha_{1,1}$ is. Similarly, $\alpha_{2,3}=\alpha_{2,4}=\alpha_{2,5}=0$ and we do not know yet what $\alpha_{2,2}$ is. Analogously, $\alpha_{3,4}=\alpha_{3,5}=0$ and $\alpha_{3,3}$ is unknown so far. At this point our future table looks like this: 
\begin{figure}[H]
\centering
\begin{tikzpicture}[scale=2]
 
\node [above, thin] at (1,0) {$2$};
\node [above, thin] at (2,0) {$4$};
\node [above, thin] at (3,0) {$7$};
\node [above, thin] at (4,0) {$8$};
\node [above, thin] at (5,0) {$10$};
\node [above, thin] at (1,-1) {$\alpha_{1,1}$};
\node [above, thin] at (2,-1) {$0$};
\node [above, thin] at (3,-1) {$0$};
\node [above, thin] at (4,-1) {$0$};
\node [above, thin] at (5,-1) {$0$};
\node [above, thin] at (1,-2) {$0$};
\node [above, thin] at (2,-2) {$\alpha_{2,2}$};
\node [above, thin] at (3,-2) {$0$};
\node [above, thin] at (4,-2) {$0$};
\node [above, thin] at (5,-2) {$0$};
\node [above, thin] at (1,-3) {$0$};
\node [above, thin] at (2,-3) {$0$};
\node [above, thin] at (3,-3) {$\alpha_{3,3}$};
\node [above, thin] at (4,-3) {$0$};
\node [above, thin] at (5,-3) {$0$};
\node [above, thin] at (1,-4) {$0$};
\node [above, thin] at (2,-4) {$0$};
\node [above, thin] at (3,-4) {$0$};
\node [above, thin] at (4,-4) {$\alpha_{4,4}$};
\node [above, thin] at (5,-4) {$\alpha_{4,5}$};

\foreach \x in {1,...,5}{
    \foreach \y in {-4,...,0}{
    \fill[fill=black] (\x,\y) circle (0.03 cm);
    }}

\end{tikzpicture}
\caption{}
\end{figure}
Finally, $x_1x_2^2x_3^3x_4x_5^2$ gives $\alpha_{1,1}=1$, $\alpha_{2,2}=2$, $\alpha_{3,3}=4$, $\alpha_{4,4}=7$ and $\alpha_{4,5}=8$
\item All the table conditions are satisfied. Therefore, $K=K(T)$ where $T$ the following table:

\begin{figure}[H]
\centering
\begin{tikzpicture}[scale=2]
\draw [red, thick] (1,0) -- (2,-1) -- (2,-2) -- (2,-1) -- (5,-1);
\draw [green, thick] (2,0) -- (2,-1) -- (3,-2) -- (3,-3) -- (3,-2) -- (5,-2);
\draw [cyan, thick] (3,0) -- (3,-2) -- (4,-3) -- (4,-4) -- (4,-3) -- (5,-3);
\draw [yellow, thick] (4,0) -- (4,-3) -- (5,-4);
 
\node [above, thin] at (1,0) {$2$};
\node [above, thin] at (2,0) {$4$};
\node [above, thin] at (3,0) {$7$};
\node [above, thin] at (4,0) {$8$};
\node [above, thin] at (5,0) {$10$};
\node [above, thin] at (1,-1) {$1$};
\node [above, thin] at (2,-1) {$0$};
\node [above, thin] at (3,-1) {$0$};
\node [above, thin] at (4,-1) {$0$};
\node [above, thin] at (5,-1) {$0$};
\node [above, thin] at (1,-2) {$0$};
\node [above, thin] at (2,-2) {$2$};
\node [above, thin] at (3,-2) {$0$};
\node [above, thin] at (4,-2) {$0$};
\node [above, thin] at (5,-2) {$0$};
\node [above, thin] at (1,-3) {$0$};
\node [above, thin] at (2,-3) {$0$};
\node [above, thin] at (3,-3) {$4$};
\node [above, thin] at (4,-3) {$0$};
\node [above, thin] at (5,-3) {$0$};
\node [above, thin] at (1,-4) {$0$};
\node [above, thin] at (2,-4) {$0$};
\node [above, thin] at (3,-4) {$0$};
\node [above, thin] at (4,-4) {$7$};
\node [above, thin] at (5,-4) {$8$};

\foreach \x in {1,...,5}{
    \foreach \y in {-4,...,0}{
    \fill[fill=black] (\x,\y) circle (0.03 cm);
    }}

\end{tikzpicture}
\caption{}
\end{figure}

\end{enumerate}
\end{example}

\bibliography{References_ML-algebra}

@article{ML_NT,
	Author = {L. Alessandretti and A. Baronchelli and Y.-H. He},
	Title = {Machine {L}earning meets {N}umber {T}heory: {T}he {D}ata {S}cience of {B}irch-{S}winnerton-{D}yer},
	Journal = {eprint arXiv:1911.02008},
	Year = {2019}}

@article{ML_dessins,
	Author = {Y.-H. He and E. Hirst and T. Peterken},
	Title = {Machine-{L}earning {D}essins d'{E}nfants: {E}xplorations via {M}odular and {S}eiberg-{W}itten {C}urves},
	Journal = {J. Phys. A: Math. Theor.},
    Volume = {54},
    Number = {7},
	Year = {2021}}

@article{ML_NF,
	Author = {Y.-H. He and K.-H. Lee and T. Oliver},
	Title = {Machine-{L}earning {N}umber {F}ields},
	Journal = {eprint arXiv:2011.08958},
	Year = {2020}}

@article{ML_arithmetic-curves,
	Author = {Y.-H. He and K.-H. Lee and T. Oliver},
	Title = {Machine {L}earning {A}rithmetic {C}urves},
	Journal = {eprint arXiv:2012.04084},
	Year = {2020}}

@article{ML_satotate,
	Author = {Y.-H. He and K.-H. Lee and T. Oliver},
	Title = {Machine-{L}earning the {S}ato–{T}ate {C}onjecture},
	Journal = {eprint arXiv:2010.01213},
	Year = {2020}}

@article{ML_alg,
	Author = {Y.-H. He and M. Kim},
	Title = {Learning {A}lgebraic {S}tructures: {P}reliminary {I}nvestigations},
	Journal = {eprint arXiv:1905.02263},
	Year = {2019}}

@article{ML_math,
	Author = {Y.-H. He},
	Journal = {eprint arXiv:2101.06317},
	Title = {Machine-{L}earning {M}athematical {S}tructures},
	Year = {2021}}

@article{DL_landscape,
	Author = {Y.-H. He},
	Title = {Deep {L}earning the {L}andscape},
	Journal = {Science},
    Volume = {365},
    Number = {6452},
	Year = {2019}}

@article{ML_string,
	Author = {Y.-H. He},
	Title = {Machine {L}earning the {S}tring {L}andscape},
	Journal = {Phys. Lett. B},
    Volume = {774},
    Number = {564},
	Year = {2017}}

@article{ML_Lie,
	Author = {H. Y. Chen and Y. H. He and S. Lal and S. Majumder},
	Title = {Machine {L}earning {L}ie {S}tructures \&
{A}pplications to {P}hysics},
	Journal = {eprint arXiv:2011.00871},
	Year = {2020}}

@inproceedings{ML_lattices,
    Author = {B. Davis},
    Title = {Predicting the {I}nteger {D}ecomposition {P}roperty via {M}achine {L}earning},
    Year = {2019},
    Pages = {165--181},
    Booktitle = {Algebraic and {G}eometric {C}ombinatorics on {L}attice {P}olytopes. {P}roceedings of the {S}ummer {W}orkshop on {L}attice {P}olytopes},
    Doi = {https://doi.org/10.1142/9789811200489_0010}
}

@article{scikit-learn,
 title={Scikit-learn: Machine Learning in {P}ython},
 author={Pedregosa, F. and Varoquaux, G. and Gramfort, A. and Michel, V.
         and Thirion, B. and Grisel, O. and Blondel, M. and Prettenhofer, P.
         and Weiss, R. and Dubourg, V. and Vanderplas, J. and Passos, A. and
         Cournapeau, D. and Brucher, M. and Perrot, M. and Duchesnay, E.},
 journal={Journal of Machine Learning Research},
 volume={12},
 pages={2825--2830},
 year={2011}
}

@inproceedings{pt_geom,
  title={Fast Graph Representation Learning with {PyTorch Geometric}},
  author={Fey, M. and Lenssen, J. E.},
  booktitle={ICLR Workshop on Representation Learning on Graphs and Manifolds},
  year={2019},
}

@article{GNN_conv,
  author    = {T. N. Kipf and
               M. Welling},
  title     = {Semi-Supervised Classification with Graph Convolutional Networks},
  journal   = {CoRR},
  volume    = {abs/1609.02907},
  year      = {2016},
  url       = {http://arxiv.org/abs/1609.02907},
  archivePrefix = {arXiv},
  eprint    = {1609.02907},
  timestamp = {Mon, 13 Aug 2018 16:48:31 +0200},
  biburl    = {https://dblp.org/rec/journals/corr/KipfW16.bib},
  bibsource = {dblp computer science bibliography, https://dblp.org}
}

@article{gnns,
  author    = {J. Zhou and
               G. Cui and
               Z. Zhang and
               C. Yang and
               Z. Liu and
               M. Sun},
  title     = {Graph {N}eural {N}etworks: {A} {R}eview of {M}ethods and {A}pplications},
  journal   = {CoRR},
  volume    = {abs/1812.08434},
  year      = {2018},
  url       = {http://arxiv.org/abs/1812.08434},
  archivePrefix = {arXiv},
  eprint    = {1812.08434},
  timestamp = {Tue, 01 Dec 2020 13:54:50 +0100},
  biburl    = {https://dblp.org/rec/journals/corr/abs-1812-08434.bib},
  bibsource = {dblp computer science bibliography, https://dblp.org}
}

@article{table,
  title={On decomposing monomial algebras with the {L}efschetz properties},
  author={O. Gasanova and S. Lundqvist and L. Nicklasson},
  journal={arXiv preprint arXiv:2006.14453},
  year={2020}
}

@incollection{SLP,
  title={Lefschetz properties},
  author={Harima, Tadahito and Maeno, Toshiaki and Morita, Hideaki and Numata, Yasuhide and Wachi, Akihito and Watanabe, Junzo},
  booktitle={The Lefschetz Properties},
  pages={97--140},
  year={2013},
  publisher={Springer}
}

@article{miro2020weak,
  title={The weak {L}efschetz property of {G}orenstein algebras of codimension three associated to the {A}p{\'e}ry sets},
  author={Mir{\'o} Roig, R. M. and Tran, Q. H.},
  journal={Linear Algebra and its Applications},
  volume={604},
  pages={346--369},
  year={2020},
  publisher={Elsevier}
}

@article{guerrieri2019lefschetz,
  title={Lefschetz properties of {G}orenstein graded algebras associated to the {A}p{\'e}ry set of a numerical semigroup},
  author={L. Guerrieri},
  journal={Arkiv f{\"o}r Matematik},
  volume={57},
  number={1},
  pages={85--106},
  year={2019},
  publisher={Institut Mittag-Leffler}
}

@misc{GithubCode,
  author = {L. Amorós and L. Jakobsson},
  title = {Table ideals \emph{\url{https://github.com/laia-ac/table_ideals_ML}}},
  year = {2021},
  publisher = {GitHub},
  journal = {GitHub repository},
  howpublished = {\url{https://github.com/laia-ac/table_ideals_ML}}
}

@book{Jung,
  author={A. Jung},
  title={Machine {L}earning: {T}he {B}asics. \emph{available online at \url{https://alexjungaalto.github.io/MLBasicsBook.pdf}}},
  publisher={under preparation},
  year={2021},

}

@article{juhnke2018,
  title={Lefschetz properties for complete intersection ideals generated by products of linear forms},
  author={M. Juhnke Kubitzke and R. M. Miró Roig and S. Murai and A. Wachi},
  journal={Proceedings of the American Mathematical Society},
  volume={146},
  number={8},
  pages={3249--3256},
  year={2018}
}

@misc{kingma2017adam,
      title={Adam: {A} {M}ethod for {S}tochastic {O}ptimization}, 
      author={D. P. Kingma and J. Ba},
      year={2017},
      eprint={1412.6980},
      archivePrefix={arXiv},
      primaryClass={cs.LG}
}
\bibliographystyle{alpha, maxbibnames=99}

\newpage
\appendix
\section{Reduction of tables}
\label{reduction}
Given that several tables might give the same ideal, we are aiming for some sort of minimality. For example, for a given table ideal we would like to use a disjoint union of tables with the smallest possible number of total entries. We will see what kinds of irregularities might occur and how tables will be reduced.

\begin{enumerate}

\item Assume $\alpha_{i,i}=0$. We erase the last $s-i+1$ colours and remove the last $s-i+1$ rows of the table. We will call it "cutting at $\alpha_{i,i}=0$". Note that after this procedure columns $i,\ldots, n$ will become unconstrained.

\begin{figure}[H]
\begin{center}
\begin{tikzpicture}[scale=1.4]
      \draw [red,thick]  (2,-1) -- (9,-1);
      \draw [red,thick]  (2,-1) -- (2,-2);
      \draw [red,thick]  (2,-1) -- (1,0);
      
      \draw[green, thick]  (2,-1) -- (3,-2) -- (9,-2);
      \draw[green, thick]  (3,-2) -- (3,-3);
      \draw[green, thick]  (2,-1) -- (2, 0);

      \draw[cyan, thick]  (3,-1) -- (3,-2) -- (4,-3)--(9,-3);
      \draw[cyan, thick]  (4,-4) -- (4,-3);
      \draw[cyan, thick]  (3,-1) -- (3,0);

      \draw[yellow,thick]  (4,-1) -- (4,-3) -- (5,-4) -- (9,-4);
      \draw[yellow,thick]  (5,-4) -- (5,-5);
      \draw[yellow,thick]  (4,-1) -- (4,0);
      
      \draw[blue,thick]  (5,-1) -- (5,-4) -- (6,-5) -- (9,-5);
      \draw[blue,thick]  (5,-1) -- (5,-0);
	
    \foreach \x in {1,...,2}{
    \node [above, thin] at (\x,0) {$d_\x$};
    \node [above, thin] at (\x,-1) {$\alpha_{1,\x}$};
    
    }
    \node [above, thin] at (1,-2) {$0$};
    \node [above, thin] at (2,-2) {$\alpha_{2,2}$};
    

    \node [above, thin] at (4,-1) {$\alpha_{1,s-1}$};
    \node [above, thin] at (5,-1) {$\alpha_{1,s}$};
    \node [above, thin] at (6,-1) {$\alpha_{1,s+1}$};
    \node [above, thin] at (8,-1) {$\alpha_{1,n-1}$};
    \node [above, thin] at (9,-1) {$\alpha_{1,n}$};
    
    \node [above, thin] at (4,-2) {$\alpha_{2,s-1}$};
    \node [above, thin] at (5,-2) {$\alpha_{2,s}$};
    \node [above, thin] at (6,-2) {$\alpha_{2,s+1}$};
    \node [above, thin] at (8,-2) {$\alpha_{2,n-1}$};
    \node [above, thin] at (9,-2) {$\alpha_{2,n}$};

    \node [above, thin] at (4,0) {$d_{s-1}$};    
    \node [above, thin] at (5,0) {$d_{s}$};  
    \node [above, thin] at (6,0) {$d_{s+1}$};  
    \node [above, thin] at (8,0) {$d_{n-1}$};
    \node [above, thin] at (9,0) {$d_{n}$};

    \node [above, thin] at (1,-4) {$0$};
    \node [above, thin] at (2,-4) {$0$};
    
    \node [left, thin] at (4,-3.85) {\textcolor{cyan}{$0$}};
    
    \node [above, thin] at (5,-4) {$\alpha_{s-1,s}$};
    \node [above, thin] at (6,-4) {$\alpha_{s-1,s+1}$};
    \node [above, thin] at (8,-4) {$\alpha_{s-1,n-1}$};
    \node [above, thin] at (9,-4) {$\alpha_{s-1,n}$};
    \node [above, thin] at (1,-5) {$0$};
    \node [above, thin] at (2,-5) {$0$};
    \node [above, thin] at (4,-5) {$0$};
    \node [above, thin] at (5,-5) {$\alpha_{s,s}$};
    \node [above, thin] at (6,-5) {$\alpha_{s,s+1}$};
    \node [above, thin] at (8,-5) {$\alpha_{s,n-1}$};
    \node [above, thin] at (9,-5) {$\alpha_{s,n}$};
    
    \foreach \x in {1,...,9}{
    \node [above, thin] at (\x,-3) {$\ldots$};
    }
    \foreach \y in {-5,...,0}{
     \node [above, thin] at (3,\y) {$\ldots$};
     \node [above, thin] at (7,\y) {$\ldots$};
    }

    \foreach \x in {1,...,2}{
    \foreach \y in {-2,...,0}{
    \fill[fill=black] (\x,\y) circle (0.03 cm);
    }}
    
    \foreach \x in {1,...,2}{
    \foreach \y in {-5,...,-4}{
    \fill[fill=black] (\x,\y) circle (0.03 cm);
    }}
    
    \foreach \x in {4,...,6}{
    \foreach \y in {-2,...,0}{
    \fill[fill=black] (\x,\y) circle (0.03 cm);
    }}
    \foreach \x in {4,...,6}{
    \foreach \y in {-5,...,-4}{
    \fill[fill=black] (\x,\y) circle (0.03 cm);
    }}
    
    \foreach \x in {8,...,9}{
    \foreach \y in {-2,...,0}{
    \fill[fill=black] (\x,\y) circle (0.03 cm);
    }}
    \foreach \x in {8,...,9}{
    \foreach \y in {-5,...,-4}{
    \fill[fill=black] (\x,\y) circle (0.03 cm);
    }}
    \draw [dashed]  (0.5,-3.5)--(9.5,-3.5);
    \node[thin] at (0.5,-3.5){\ding{34}};
 
  \foreach \x in {4,...,5}{
    \foreach \y in {-2,...,0}{
 \node[cross,thin, scale=0.8] at (\x,\y-0.5) {};
    }}
\end{tikzpicture}
\end{center}
\end{figure}

\item An unconstrained zero column. By a zero column we mean a column $j$ for which $\alpha_{1,j}=\cdots=\alpha_{s,j}=0$. Note that $d_j$ does not have to be $0$ (in fact, if $d_j=0$, the table immediately reduces to the empty table). By \cref{permute}, we can without loss of generality assume that the zero column in question is column $n$. We simply remove this column from our table, together with its adjacent edges, and put $d_n$ into a separate table as shown below. Our table splits into a disjoint union of $2$ tables.

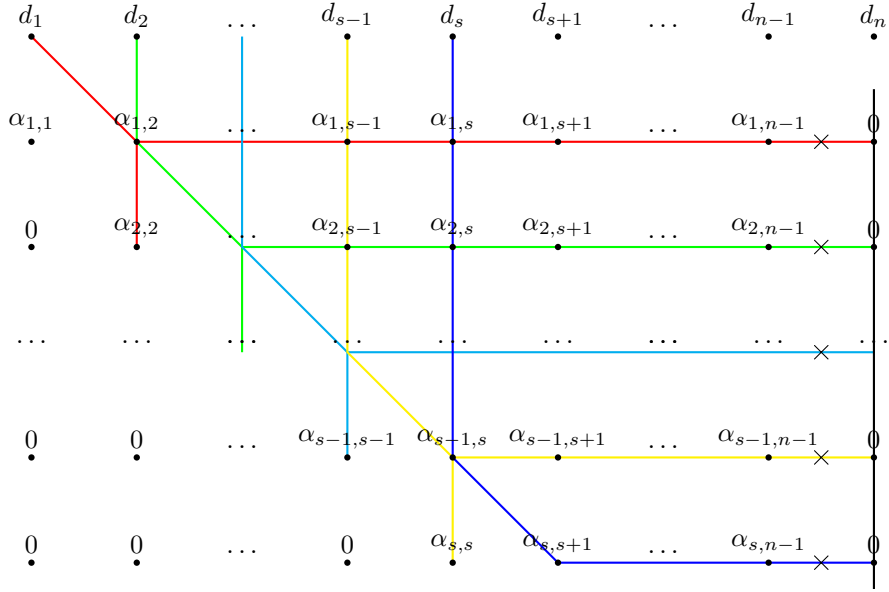
\begin{figure}[ht]

\begin{center}
\begin{tikzpicture}[scale=1.4]
      \draw [red,thick]  (2,-1) -- (9,-1);
      \draw [red,thick]  (2,-1) -- (2,-2);
      \draw [red,thick]  (2,-1)--(1,0);
      
      \draw[green, thick]  (2,-1)-- (3,-2) -- (9,-2);
      \draw[green, thick]  (3,-2) -- (3,-3);
      \draw[green,thick] (2,-1)--(2,0);

      \draw[cyan, thick]  (3,-1)-- (3,-2) -- (4,-3)--(9,-3);
      \draw[cyan, thick]  (4,-4)-- (4,-3);
      \draw[cyan, thick] (3,-1)--(3,0);

      \draw[yellow,thick]  (4,-1)-- (4,-3) --(5,-4)-- (9,-4);
      \draw[yellow,thick]  (5,-4)-- (5,-5);
      \draw[yellow,thick]  (4,-1)-- (4,0);
      \draw[blue,thick]  (5,-1)--(5,-4) --(6,-5)-- (9,-5);
      \draw[blue,thick]  (5,-1)--(5,-0);

    \foreach \x in {1,...,2}{
    \node [above, thin] at (\x,0) {$d_\x$};
    \node [above, thin] at (\x,-1) {$\alpha_{1,\x}$};
    }
    \node [above, thin] at (1,-2) {$0$};
    \node [above, thin] at (2,-2) {$\alpha_{2,2}$};

    \node [above, thin] at (4,-1) {$\alpha_{1,s-1}$};
    \node [above, thin] at (5,-1) {$\alpha_{1,s}$};
    \node [above, thin] at (6,-1) {$\alpha_{1,s+1}$};
    \node [above, thin] at (8,-1) {$\alpha_{1,n-1}$};
    \node [above, thin] at (9,-1) {$0$};
    
    \node [above, thin] at (4,-2) {$\alpha_{2,s-1}$};
    \node [above, thin] at (5,-2) {$\alpha_{2,s}$};
    \node [above, thin] at (6,-2) {$\alpha_{2,s+1}$};
    \node [above, thin] at (8,-2) {$\alpha_{2,n-1}$};
    \node [above, thin] at (9,-2) {$0$};

    \node [above, thin] at (4,0) {$d_{s-1}$};    
    \node [above, thin] at (5,0) {$d_{s}$};  
    \node [above, thin] at (6,0) {$d_{s+1}$};  
    \node [above, thin] at (8,0) {$d_{n-1}$};
    \node [above, thin] at (9,0) {$d_{n}$};

    \node [above, thin] at (1,-4) {$0$};
    \node [above, thin] at (2,-4) {$0$};
    
    \node [above, thin] at (4,-4) {$\alpha_{s-1,s-1}$};
    
    \node [above, thin] at (5,-4) {$\alpha_{s-1,s}$};
    
    \node [above, thin] at (6,-4) {$\alpha_{s-1,s+1}$};
    \node [above, thin] at (8,-4) {$\alpha_{s-1,n-1}$};
    \node [above, thin] at (9,-4) {$0$};

    \node [above, thin] at (1,-5) {$0$};
    \node [above, thin] at (2,-5) {$0$};
    \node [above, thin] at (4,-5) {$0$};
    \node [above, thin] at (5,-5) {$\alpha_{s,s}$};
    \node [above, thin] at (6,-5) {$\alpha_{s,s+1}$};
    \node [above, thin] at (8,-5) {$\alpha_{s,n-1}$};
    \node [above, thin] at (9,-5) {$0$};
    
    \foreach \x in {1,...,9}{
    \node [above, thin] at (\x,-3) {$\ldots$};
    }
    \foreach \y in {-5,...,0}{
     \node [above, thin] at (3,\y) {$\ldots$};
     \node [above, thin] at (7,\y) {$\ldots$};
    }

    \foreach \x in {1,...,2}{
    \foreach \y in {-2,...,0}{
    \fill[fill=black] (\x,\y) circle (0.03 cm);
    }}
    
    \foreach \x in {1,...,2}{
    \foreach \y in {-5,...,-4}{
    \fill[fill=black] (\x,\y) circle (0.03 cm);
    }}
    
    \foreach \x in {4,...,6}{
    \foreach \y in {-2,...,0}{
    \fill[fill=black] (\x,\y) circle (0.03 cm);
    }}
    \foreach \x in {4,...,6}{
    \foreach \y in {-5,...,-4}{
    \fill[fill=black] (\x,\y) circle (0.03 cm);
    }}
    
    \foreach \x in {8,...,9}{
    \foreach \y in {-2,...,0}{
    \fill[fill=black] (\x,\y) circle (0.03 cm);
    }}
    \foreach \x in {8,...,9}{
    \foreach \y in {-5,...,-4}{
    \fill[fill=black] (\x,\y) circle (0.03 cm);
    }}
    
    \foreach \y in {-5,...,-1}{
 \node[cross,thin, scale=0.8] at (8.5,\y) {};
    }
    \draw[thick] (9, -5.25) -- (9, -0.5);
    
\end{tikzpicture}
\caption{Removing a zero column}
\end{center}
\end{figure}

Note that if we have a constrained zero column, say, column $i$, then in particular $\alpha_{i,i}=0$. In this case we can first cut the table at $\alpha_{i,i}=0$, after which column $i$ becomes unconstrained, and deal with the unconstrained zero column.

We will now see how to reduce a table with a singularity. By singularity we mean that $\alpha_{1,j}+\alpha_{2,j}+\cdots+\alpha_{s,j}=d_j$ for some $i$, in other words, $d_j$ equals the sum of the rest of numbers in column $j$. 

\item Singularity in an unconstrained column. We can without loss of generality assume that column $n$ is singular. We do the following:
\begin{itemize}
    \item Add $\alpha_{s,s}$ to $\alpha_{s-1,s}$, unless $s=1$ (that is, unless the table only has $1$ colour). If $s=1$, subtract $\alpha_{1,1}$ from $\alpha_{0,1}=d_1$.
    \item Remove the last row of the table and erase edges of colour $s$.
    
\end{itemize}

\begin{figure}[ht]

\begin{center}
\begin{tikzpicture}[scale=1.4]

      \draw [red,thick]   (2,-1) -- (9,-1);
      \draw [red,thick]   (2,-1) -- (2,-2);
      \draw [red,thick]   (2,-1) -- (1, 0);
      
      \draw [green, thick]  (2,-1) -- (3,-2) -- (9,-2);
      \draw [green, thick]  (3,-2) -- (3,-3);
      \draw [green, thick]  (2,-1) -- (2, 0);

      \draw [cyan, thick]  (3,-1) -- (3,-2) -- (4,-3) -- (9,-3);
      \draw [cyan, thick]  (4,-4) -- (4,-3);
      \draw [cyan, thick]  (3,-1) -- (3, 0);

      \draw[yellow,thick]  (4,-1) -- (4,-3) -- (5,-4) -- (9,-4);
      \draw[yellow,thick]  (5,-4) -- (5,-5);
      \draw[yellow,thick]  (4,-1) -- (4, 0);
      
      \draw[blue,thick]  (5,-1) -- (5,-4) -- (6,-5) -- (9,-5);
      \draw[blue,thick]  (5,-1) -- (5, 0);

    \foreach \x in {1,...,2}{
    \node [above, thin] at (\x,0) {$d_\x$};
    \node [above, thin] at (\x,-1) {$\alpha_{1,\x}$};
    
    }
    \node [above, thin] at (1,-2) {$0$};
    \node [above, thin] at (2,-2) {$\alpha_{2,2}$};

    \node [above, thin] at (4,-1) {$\alpha_{1,s-1}$};
    \node [above, thin] at (5,-1) {$\alpha_{1,s}$};
    \node [above, thin] at (6,-1) {$\alpha_{1,s+1}$};
    \node [above, thin] at (8,-1) {$\alpha_{1,n-1}$};
    \node [above, thin] at (9,-1) {$\alpha_{1,n}$};
    
    \node [above, thin] at (4,-2) {$\alpha_{2,s-1}$};
    \node [above, thin] at (5,-2) {$\alpha_{2,s}$};
    \node [above, thin] at (6,-2) {$\alpha_{2,s+1}$};
    \node [above, thin] at (8,-2) {$\alpha_{2,n-1}$};
    \node [above, thin] at (9,-2) {$\alpha_{2,n}$};

    \node [above, thin] at (4,0) {$d_{s-1}$};    
    \node [above, thin] at (5,0) {$d_{s}$};  
    \node [above, thin] at (6,0) {$d_{s+1}$};  
    \node [above, thin] at (8,0) {$d_{n-1}$};
    \node [above, thin] at (9.5,0) {$d_{n}=\sum_{i=1}^{i=s}{\alpha_{i,n}}$};

    \node [above, thin] at (1,-4) {$0$};
    \node [above, thin] at (2,-4) {$0$};
    
    \node [above, thin] at (4,-4) {$\alpha_{s-1,s-1}$};
    
    \node [above, thin] at (5,-4) {$\alpha_{s-1,s}$};
    
    \node [above, thin] at (6,-4) {$\alpha_{s-1,s+1}$};
    \node [above, thin] at (8,-4) {$\alpha_{s-1,n-1}$};
    \node [above, thin] at (9,-4) {$\alpha_{s-1,n}$};

    \node [above, thin] at (1,-5) {$0$};
    \node [above, thin] at (2,-5) {$0$};
    \node [above, thin] at (4,-5) {$0$};
    \node [above, thin] at (5,-5) {$\alpha_{s,s}$};
    \node [above, thin] at (6,-5) {$\alpha_{s,s+1}$};
    \node [above, thin] at (8,-5) {$\alpha_{s,n-1}$};
    \node [above, thin] at (9,-5) {$\alpha_{s,n}$};
    
    \foreach \x in {1,...,9}{
    \node [above, thin] at (\x,-3) {$\ldots$};
    }
    \foreach \y in {-5,...,0}{
     \node [above, thin] at (3,\y) {$\ldots$};
      \node [above, thin] at (7,\y) {$\ldots$};
      }

    \foreach \x in {1,...,2}{
    \foreach \y in {-2,...,0}{
    \fill[fill=black] (\x,\y) circle (0.03 cm);
    }}
    
    \foreach \x in {1,...,2}{
    \foreach \y in {-5,...,-4}{
    \fill[fill=black] (\x,\y) circle (0.03 cm);
    }}
    
    \foreach \x in {4,...,6}{
    \foreach \y in {-2,...,0}{
    \fill[fill=black] (\x,\y) circle (0.03 cm);
    }}
    \foreach \x in {4,...,6}{
    \foreach \y in {-5,...,-4}{
    \fill[fill=black] (\x,\y) circle (0.03 cm);
    }}
    
    \foreach \x in {8,...,9}{
    \foreach \y in {-2,...,0}{
    \fill[fill=black] (\x,\y) circle (0.03 cm);
    }}
    \foreach \x in {8,...,9}{
    \foreach \y in {-5,...,-4}{
    \fill[fill=black] (\x,\y) circle (0.03 cm);
    }}
    \draw[dashed] (0.5,-4.5)--(9.5,-4.5);
    \node[thin] at (0.5,-4.5){\ding{34}};
    \foreach \y in {-3,...,0}{
 \node[cross,thin, scale=0.8] at (5,\y-0.5) {};}
  \draw[dashed, -latex] (5,-5) arc (225:135:cos 45);
   \node[thin] at (4.7,-4.3) {$_{+}$}; 

\end{tikzpicture}
\caption{Removing a singularity in an unconstrained column for $s>1$}
\end{center}
\end{figure}

\begin{figure}[ht]

\begin{center}
\begin{tikzpicture}[scale=2]
\draw [red,thick]  (2,-1) -- (4,-1);
\draw [red,thick]  (2,-1) -- (1, 0);
\draw [dashed]  (0.5,-0.5)--(4.5,-0.5);
   \node[thin] at (0.5,-0.5){\ding{34}};
   \draw[dashed, -latex] (1,-1) arc (225:135:cos 45);
	
    \node [above, thin] at (1,0) {$d_1$};
    \node [above, thin] at (2,0) {$d_2$};
    \node [above, thin] at (2,-1) {$\alpha_{1,2}$};
    \node [above, thin] at (1,-1) {$\alpha_{1,1}$};

    \node [above, thin] at (3,-1) {$\ldots$};    
    \node [above, thin] at (4,-1) {$\alpha_{1,n}$};
    

    \node [above, thin] at (3,0) {$\ldots$};    
    \node [above, thin] at (4,0) {$d_{n}=\alpha_{1,n}$};    
     
    \foreach \x in {1,...,2}{
    \foreach \y in {-1,...,0}{
    \fill[fill=black] (\x,\y) circle (0.03 cm);
    }}

    \foreach \y in {-1,...,0}{
    \fill[fill=black] (4,\y) circle (0.03 cm);
    }
   \node[thin] at (0.7,-0.3){$_{-}$};
   
\end{tikzpicture}
\caption{Removing a singularity in an unconstrained column for $s=1$}
\end{center}
\end{figure}

Clearly, the procedure of removing a singularity in column $n$ for $s>1$ does not help if $\alpha_{s,n}=0$. In this case we repeat the procedure as long as needed. 
\item  Singularity in a constrained column. If a singularity occurs in a constrained column $i$, we do the following:
\begin{enumerate}
   \item Place column $i$ to the right of column $n$. Draw edges in such a way that they still indicate which numbers sum up to which. 
 
 \begin{figure}[ht]

\begin{center}
\begin{tikzpicture}[scale=1.4]
      \draw [red,thick]     (2,-1) -- (9,-1);
      \draw [red,thick]    (2,-1) -- (2,-2);
      \draw [red,thick]  (2,-1)--(1,0);
      
      \draw[green, thick]  (2,-1)-- (3,-2) -- (9,-2);
      \draw[green, thick]  (3,-2) -- (3,-3);
      \draw[green,thick] (2,-1)--(2,0);

      \draw[cyan, thick]  (3, 0) -- (3,-1)-- (3,-2) -- (4,-3)--(9,-3)--(9,-4);

     \draw[yellow,thick]  (9, 0) -- (9,-3) -- (8,-4) -- (4,-4)--(4,-5);
      \draw[blue,thick]  (4,-1)--(4,-4) --(5,-5)-- (8,-5);
      \draw[blue,thick]  (4,-1)--(4,-0);

    \foreach \x in {1,...,2}{
    \node [above, thin] at (\x,0) {$d_\x$};
    \node [above, thin] at (\x,-1) {$\alpha_{1,\x}$};
    }
    \node [above, thin] at (1,-2) {$0$};
    \node [above, thin] at (2,-2) {$\alpha_{2,2}$};

    \node [above, thin] at (4,-1) {$\alpha_{1,s}$};
    \node [above, thin] at (5,-1) {$\alpha_{1,s+1}$};
    \node [above, thin] at (7,-1) {$\alpha_{1,n-1}$};
    \node [above, thin] at (8,-1) {$\alpha_{1,n}$};
    \node [above, thin] at (9,-1) {$\alpha_{1,s-1}$};
    
    \node [above, thin] at (4,-2) {$\alpha_{2,s}$};
    \node [above, thin] at (5,-2) {$\alpha_{2,s+1}$};
    \node [above, thin] at (7,-2) {$\alpha_{2,n-1}$};
    \node [above, thin] at (8,-2) {$\alpha_{2,n}$};
    \node [above, thin] at (9,-2) {$\alpha_{2,s-1}$};

    \node [above, thin] at (4,0) {$d_{s}$};    
    \node [above, thin] at (5,0) {$d_{s+1}$};  
    \node [above, thin] at (7,0) {$d_{n-1}$};  
    \node [above, thin] at (8,0) {$d_{n}$};
    \node [above, thin] at (9,0) {$d_{s-1}$};

    \node [above, thin] at (1,-4) {$0$};
    \node [above, thin] at (2,-4) {$0$};
    
    \node [above, thin] at (4,-4) {$\alpha_{s-1,s}$};
    
    \node [above, thin] at (5,-4) {$\alpha_{s-1,s+1}$};
    
    \node [above, thin] at (7,-4) {$\alpha_{s-1,n-1}$};
    \node [above, thin] at (8,-4) {$\alpha_{s-1,n}$};
    \node [above, thin] at (9,-4) {$\alpha_{s-1,s-1}$};

    \node [above, thin] at (1,-5) {$0$};
    \node [above, thin] at (2,-5) {$0$};
    \node [above, thin] at (4,-5) {$\alpha_{s,s}$};
    \node [above, thin] at (5,-5) {$\alpha_{s,s+1}$};
    \node [above, thin] at (7,-5) {$\alpha_{s,n-1}$};
    \node [above, thin] at (8,-5) {$\alpha_{s,n}$};
    \node [above, thin] at (9,-5) {$0$};
    
    \foreach \x in {1,...,9}{
    \node [above, thin] at (\x,-3) {$\ldots$};
    }
    \foreach \y in {-5,...,0}{
     \node [above, thin] at (3,\y) {$\ldots$};
     \node [above, thin] at (6,\y) {$\ldots$};
    }

    \foreach \x in {1,...,2}{
    \foreach \y in {-2,...,0}{
    \fill[fill=black] (\x,\y) circle (0.03 cm);
    }}
    
    \foreach \x in {1,...,2}{
    \foreach \y in {-5,...,-4}{
    \fill[fill=black] (\x,\y) circle (0.03 cm);
    }}
    
    \foreach \x in {4,...,5}{
    \foreach \y in {-2,...,0}{
    \fill[fill=black] (\x,\y) circle (0.03 cm);
    }}
    \foreach \x in {4,...,5}{
    \foreach \y in {-5,...,-4}{
    \fill[fill=black] (\x,\y) circle (0.03 cm);
    }}
    
    \foreach \x in {7,...,9}{
    \foreach \y in {-2,...,0}{
    \fill[fill=black] (\x,\y) circle (0.03 cm);
    }}
    \foreach \x in {7,...,9}{
    \foreach \y in {-5,...,-4}{
    \fill[fill=black] (\x,\y) circle (0.03 cm);
    }}

\end{tikzpicture}
\end{center}
\caption{Step 1}

\end{figure}

\item For each $j\in\{i+1,\ldots, n\}$ add $\alpha_{i,j}$ to $\alpha_{i-1,j}$, unless $i=1$ (that is, unless column $1$ is the singular column in question). If $i=1$, subtract $\alpha_{1,j}$ from $\alpha_{0,j}=d_j$ for each $j\in\{2,\ldots, n\}$. Erase all edges of colour $i$.   
    
\begin{figure}[H]

\centering
\begin{tikzpicture}[scale=1.4]

      \draw [red,thick]     (2,-1) -- (9,-1);
      \draw [red,thick]    (2,-1) -- (2,-2);
      \draw [red,thick]  (2,-1)--(1,0);
      
      \draw[green, thick]  (2,-1)-- (3,-2) -- (9,-2);
      \draw[green, thick]  (3,-2) -- (3,-3);
      \draw[green,thick] (2,-1)--(2,0);

      \draw[cyan, thick]  (3, 0) -- (3,-1)-- (3,-2) -- (4,-3)--(9,-3)--(9,-4);

      \draw[blue,thick]  (4,-1)--(4,-4) --(5,-5)-- (8,-5);
      \draw[blue,thick]  (4,-1)--(4,-0);

    \foreach \x in {1,...,2}{
    \node [above, thin] at (\x,0) {$d_\x$};
    \node [above, thin] at (\x,-1) {$\alpha_{1,\x}$};
    }
    \node [above, thin] at (1,-2) {$0$};
    \node [above, thin] at (2,-2) {$\alpha_{2,2}$};

    \node [above, thin] at (4,-1) {$\alpha_{1,s}$};
    \node [above, thin] at (5,-1) {$\alpha_{1,s+1}$};
    \node [above, thin] at (7,-1) {$\alpha_{1,n-1}$};
    \node [above, thin] at (8,-1) {$\alpha_{1,n}$};
    \node [above, thin] at (9,-1) {$\alpha_{1,s-1}$};
    
    \node [above, thin] at (4,-2) {$\alpha_{2,s}$};
    \node [above, thin] at (5,-2) {$\alpha_{2,s+1}$};
    \node [above, thin] at (7,-2) {$\alpha_{2,n-1}$};
    \node [above, thin] at (8,-2) {$\alpha_{2,n}$};
    \node [above, thin] at (9,-2) {$\alpha_{2,s-1}$};

    \node [above, thin] at (4,0) {$d_{s}$};    
    \node [above, thin] at (5,0) {$d_{s+1}$};  
    \node [above, thin] at (7,0) {$d_{n-1}$};  
    \node [above, thin] at (8,0) {$d_{n}$};
    \node [above, thin] at (9,0) {$d_{s-1}$};

    \node [above, thin] at (1,-4) {$0$};
    \node [above, thin] at (2,-4) {$0$};
    
    \node [above, thin] at (4,-4) {$\alpha_{s-1,s}$};
    
    \node [above, thin] at (5,-4) {$\alpha_{s-1,s+1}$};
    
    \node [above, thin] at (7,-4) {$\alpha_{s-1,n-1}$};
    \node [above, thin] at (8,-4) {$\alpha_{s-1,n}$};
    \node [above, thin] at (9,-4) {$\alpha_{s-1,s-1}$};

    \node [above, thin] at (1,-5) {$0$};
    \node [above, thin] at (2,-5) {$0$};
    \node [above, thin] at (4,-5) {$\alpha_{s,s}$};
    \node [above, thin] at (5,-5) {$\alpha_{s,s+1}$};
    \node [above, thin] at (7,-5) {$\alpha_{s,n-1}$};
    \node [above, thin] at (8,-5) {$\alpha_{s,n}$};
    \node [above, thin] at (9,-5) {$0$};
    
    \foreach \x in {1,...,9}{
    \node [above, thin] at (\x,-3) {$\ldots$};
    }
    \foreach \y in {-5,...,0}{
     \node [above, thin] at (3,\y) {$\ldots$};
     \node [above, thin] at (6,\y) {$\ldots$};
    }

    \foreach \x in {1,...,2}{
    \foreach \y in {-2,...,0}{
    \fill[fill=black] (\x,\y) circle (0.03 cm);
    }}
    
    \foreach \x in {1,...,2}{
    \foreach \y in {-5,...,-4}{
    \fill[fill=black] (\x,\y) circle (0.03 cm);
    }}
    
    \foreach \x in {4,...,5}{
    \foreach \y in {-2,...,0}{
    \fill[fill=black] (\x,\y) circle (0.03 cm);
    }}
    \foreach \x in {4,...,5}{
    \foreach \y in {-5,...,-4}{
    \fill[fill=black] (\x,\y) circle (0.03 cm);
    }}
    
    \foreach \x in {7,...,9}{
    \foreach \y in {-2,...,0}{
    \fill[fill=black] (\x,\y) circle (0.03 cm);
    }}
    \foreach \x in {7,...,9}{
    \foreach \y in {-5,...,-4}{
    \fill[fill=black] (\x,\y) circle (0.03 cm);
    }}
    \foreach \x in {4,...,8}{    
   
    \draw[dashed, -latex] (\x,-4) arc 
    (225:135:cos 45);
     \node [thin] at (\x-0.3,-3.3) {$_{+}$};
     }

\end{tikzpicture}
\caption{Step 2 for $i>1$. For $i=1$, the numbers should be subtracted, not added}

\end{figure}
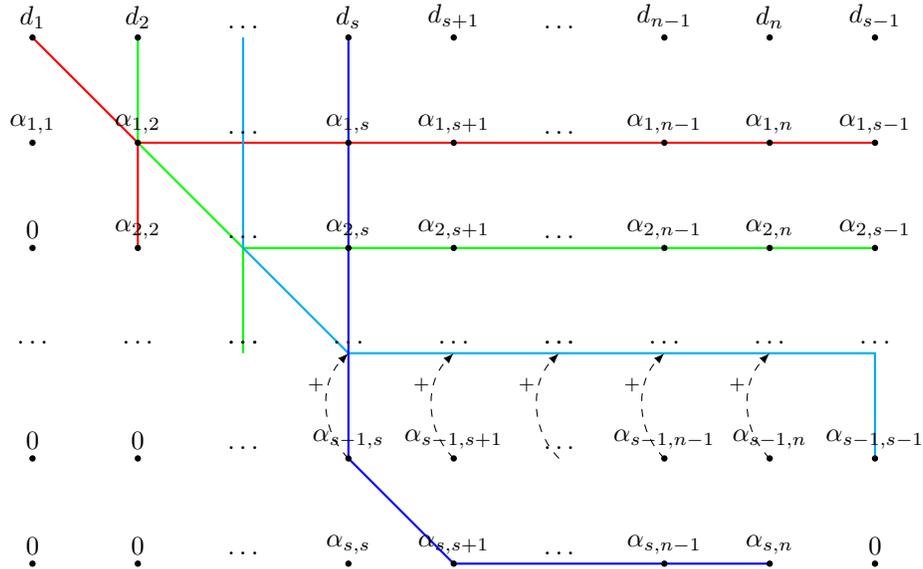    
    
  \item Remove row $i$ and its adjacent edges (the row of $d_j$ is considered row $0$).  
 
\begin{figure}[H]
\begin{center}
\begin{tikzpicture}[scale=1.4]

      \draw [red,thick]     (2,-1) -- (9,-1);
      \draw [red,thick]    (2,-1) -- (2,-2);
      \draw [red,thick]  (2,-1)--(1,0);
      
      \draw[green, thick]  (2,-1)-- (3,-2) -- (9,-2);
      \draw[green, thick]  (3,-2) -- (3,-3);
      \draw[green,thick] (2,-1)--(2,0);

      \draw[cyan, thick]  (3, 0) -- (3,-1)-- (3,-2) -- (4,-3)--(9,-3);

      \draw[blue,thick] (4,0) -- (4,-3); 
      \draw[blue,thick] (5,-4) -- (8,-4);

    \foreach \x in {1,...,2}{
    \node [above, thin] at (\x,0) {$d_\x$};
    \node [above, thin] at (\x,-1) {$\alpha_{1,\x}$};
    }
    \node [above, thin] at (1,-2) {$0$};
    \node [above, thin] at (2,-2) {$\alpha_{2,2}$};

    \node [above, thin] at (4,-1) {$\alpha_{1,s}$};
    \node [above, thin] at (5,-1) {$\alpha_{1,s+1}$};
    \node [above, thin] at (7,-1) {$\alpha_{1,n-1}$};
    \node [above, thin] at (8,-1) {$\alpha_{1,n}$};
    \node [above, thin] at (9,-1) {$\alpha_{1,s-1}$};
    
    \node [above, thin] at (4,-2) {$\alpha_{2,s}$};
    \node [above, thin] at (5,-2) {$\alpha_{2,s+1}$};
    \node [above, thin] at (7,-2) {$\alpha_{2,n-1}$};
    \node [above, thin] at (8,-2) {$\alpha_{2,n}$};
    \node [above, thin] at (9,-2) {$\alpha_{2,s-1}$};

    \node [above, thin] at (4,0) {$d_{s}$};    
    \node [above, thin] at (5,0) {$d_{s+1}$};  
    \node [above, thin] at (7,0) {$d_{n-1}$};  
    \node [above, thin] at (8,0) {$d_{n}$};
    \node [above, thin] at (9,0) {$d_{s-1}$};

    
    
    

    \node [above, thin] at (1,-4) {$0$};
    \node [above, thin] at (2,-4) {$0$};
    \node [above, thin] at (4,-4) {$\alpha_{s,s}$};
    \node [above, thin] at (5,-4) {$\alpha_{s,s+1}$};
    \node [above, thin] at (7,-4) {$\alpha_{s,n-1}$};
    \node [above, thin] at (8,-4) {$\alpha_{s,n}$};
    \node [above, thin] at (9,-4) {$0$};
    
    \foreach \x in {1,...,9}{
    \node [above, thin] at (\x,-3) {$\ldots$};
    }
    \foreach \y in {-4,...,0}{
     \node [above, thin] at (3,\y) {$\ldots$};
     \node [above, thin] at (6,\y) {$\ldots$};
    }

    \foreach \x in {1,...,2}{
    \foreach \y in {-2,...,0}{
    \fill[fill=black] (\x,\y) circle (0.03 cm);
    }}
    
    \foreach \x in {1,...,2}{
    
    \fill[fill=black] (\x,-4) circle (0.03 cm);
    }
    
    \foreach \x in {4,...,5}{
    \foreach \y in {-2,...,0}{
    \fill[fill=black] (\x,\y) circle (0.03 cm);
    }}
    \foreach \x in {4,...,5}{
    
    \fill[fill=black] (\x,-4) circle (0.03 cm);
    }
    
    \foreach \x in {7,...,9}{
    \foreach \y in {-2,...,0}{
    \fill[fill=black] (\x,\y) circle (0.03 cm);
    }}
    \foreach \x in {7,...,9}{
   
    \fill[fill=black] (\x,-4) circle (0.03 cm);
    }

\end{tikzpicture}
\end{center}
\caption{Step 3}

\end{figure}
\item Complete the picture.

\begin{figure}[H]

\begin{center}
\begin{tikzpicture}[scale=1.4]

      \draw [red,thick]     (2,-1) -- (9,-1);
      \draw [red,thick]    (2,-1) -- (2,-2);
      \draw [red,thick]  (2,-1)--(1,0);
      
      \draw[green, thick]  (2,-1)-- (3,-2) -- (9,-2);
      \draw[green, thick]  (3,-2) -- (3,-3);
      \draw[green,thick] (2,-1)--(2,0);

      \draw[cyan, thick]  (3, 0) -- (3,-1)-- (3,-2) -- (4,-3)--(9,-3);
      \draw[cyan, dashed] (4,-3) -- (4,-4);

      \draw[blue,thick] (4,0) -- (4,-3); 
      \draw[blue,thick] (5,-4) -- (8,-4); 
      \draw[blue,dashed] (5,-4) -- (4,-3); 
      \draw[blue,dashed] (9,-4) -- (8,-4);

    \foreach \x in {1,...,2}{
    \node [above, thin] at (\x,0) {$d_\x$};
    \node [above, thin] at (\x,-1) {$\alpha_{1,\x}$};
    }
    \node [above, thin] at (1,-2) {$0$};
    \node [above, thin] at (2,-2) {$\alpha_{2,2}$};

    \node [above, thin] at (4,-1) {$\alpha_{1,s}$};
    \node [above, thin] at (5,-1) {$\alpha_{1,s+1}$};
    \node [above, thin] at (7,-1) {$\alpha_{1,n-1}$};
    \node [above, thin] at (8,-1) {$\alpha_{1,n}$};
    \node [above, thin] at (9,-1) {$\alpha_{1,s-1}$};
    
    \node [above, thin] at (4,-2) {$\alpha_{2,s}$};
    \node [above, thin] at (5,-2) {$\alpha_{2,s+1}$};
    \node [above, thin] at (7,-2) {$\alpha_{2,n-1}$};
    \node [above, thin] at (8,-2) {$\alpha_{2,n}$};
    \node [above, thin] at (9,-2) {$\alpha_{2,s-1}$};

    \node [above, thin] at (4,0) {$d_{s}$};    
    \node [above, thin] at (5,0) {$d_{s+1}$};  
    \node [above, thin] at (7,0) {$d_{n-1}$};  
    \node [above, thin] at (8,0) {$d_{n}$};
    \node [above, thin] at (9,0) {$d_{s-1}$};

    
    
    

    \node [above, thin] at (1,-4) {$0$};
    \node [above, thin] at (2,-4) {$0$};
    \node [above, thin] at (4,-4) {$\alpha_{s,s}$};
    \node [above, thin] at (5,-4) {$\alpha_{s,s+1}$};
    \node [above, thin] at (7,-4) {$\alpha_{s,n-1}$};
    \node [above, thin] at (8,-4) {$\alpha_{s,n}$};
    \node [above, thin] at (9,-4) {$0$};
    
    \foreach \x in {1,...,9}{
    \node [above, thin] at (\x,-3) {$\ldots$};
    }
    \foreach \y in {-4,...,0}{
     \node [above, thin] at (3,\y) {$\ldots$};
     \node [above, thin] at (6,\y) {$\ldots$};
    }

    \foreach \x in {1,...,2}{
    \foreach \y in {-2,...,0}{
    \fill[fill=black] (\x,\y) circle (0.03 cm);
    }}
    
    \foreach \x in {1,...,2}{
    
    \fill[fill=black] (\x,-4) circle (0.03 cm);
    }
    
    \foreach \x in {4,...,5}{
    \foreach \y in {-2,...,0}{
    \fill[fill=black] (\x,\y) circle (0.03 cm);
    }}
    \foreach \x in {4,...,5}{
    
    \fill[fill=black] (\x,-4) circle (0.03 cm);
    }
    
    \foreach \x in {7,...,9}{
    \foreach \y in {-2,...,0}{
    \fill[fill=black] (\x,\y) circle (0.03 cm);
    }}
    \foreach \x in {7,...,9}{
   
    \fill[fill=black] (\x,-4) circle (0.03 cm);
    }

\end{tikzpicture}
\end{center}
\caption{Step 4}

\end{figure}     
    
\end{enumerate}
Note that removing a singularity in column $1$ will result into an unconstrained zero column. We can then reduce the table further according to rule~3 and get a disjoint union of tables.

\end{enumerate}

\section{Decision tree graphs}

\label{app:dt_graph}
\begin{figure}
    \rotatebox{90}{
    \includegraphics[scale=0.3]{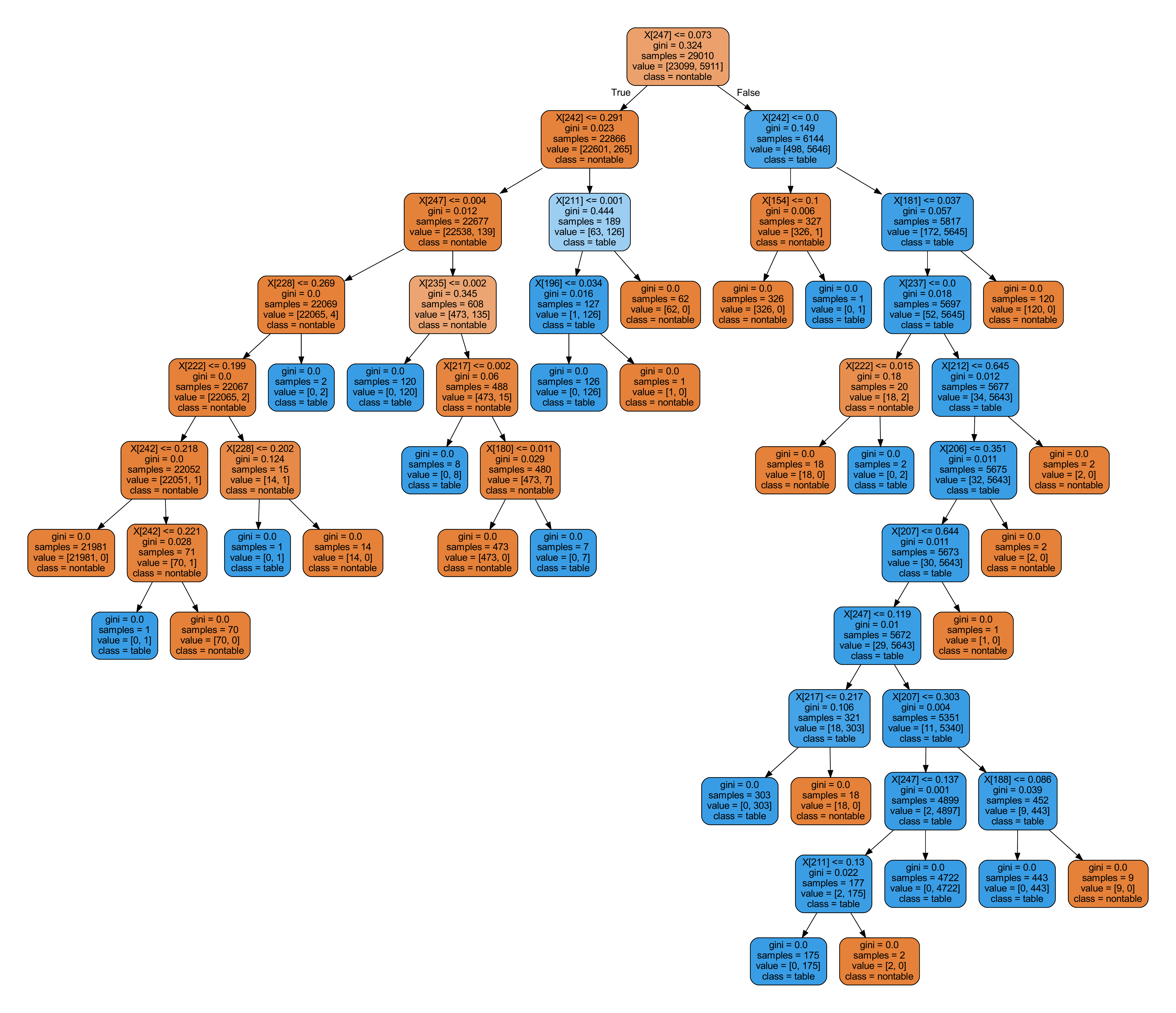}}
    \caption{Decision tree for 10 variables}
    \label{fig:tree2}
\end{figure}

\begin{figure}
    \rotatebox{90}{
    
    \includegraphics[scale=0.4]{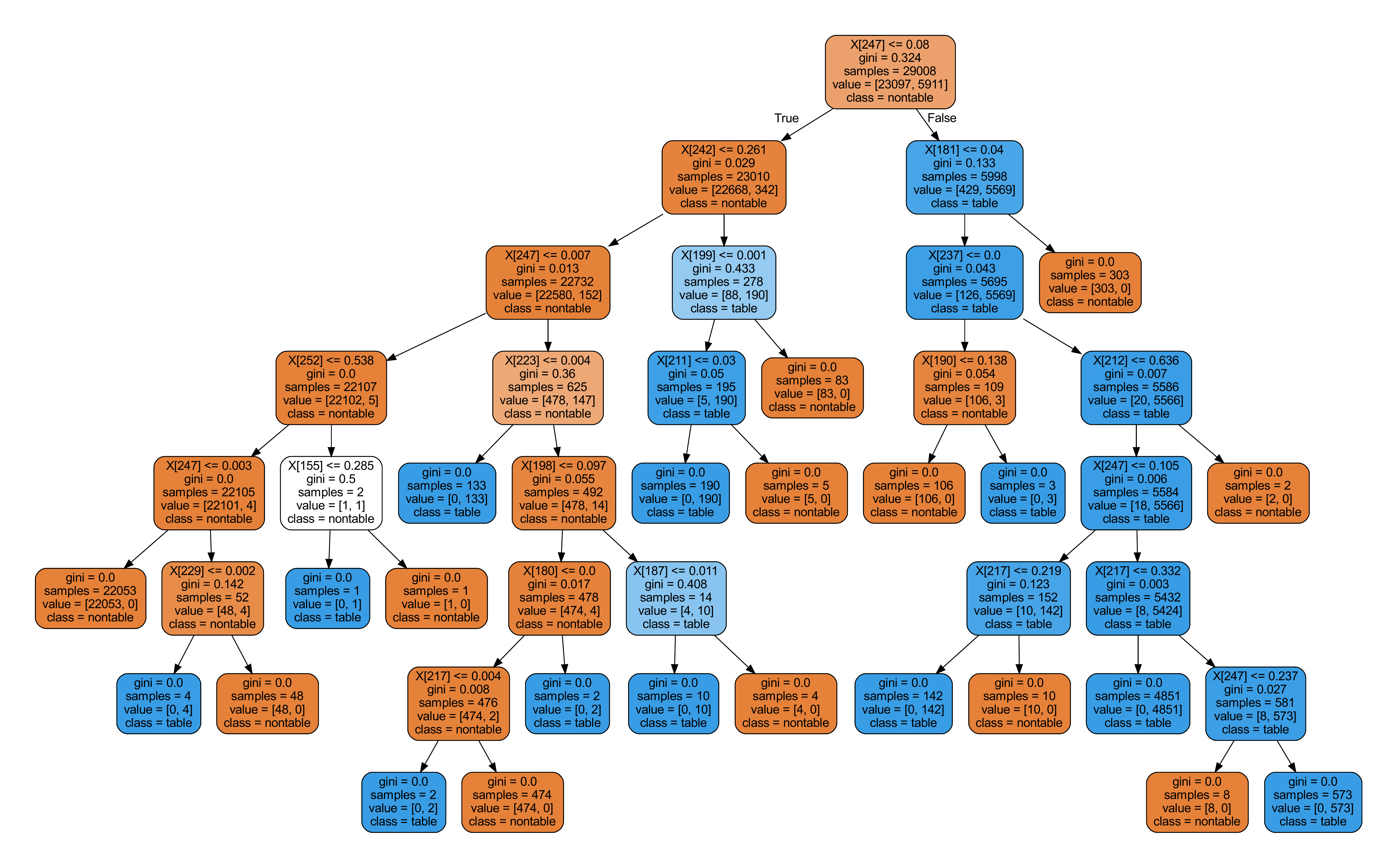}}
    \caption{Decision tree for 10 variables}
    \label{fig:tree3}
\end{figure}

\begin{figure}
    \rotatebox{90}{
    \includegraphics[scale=0.3]{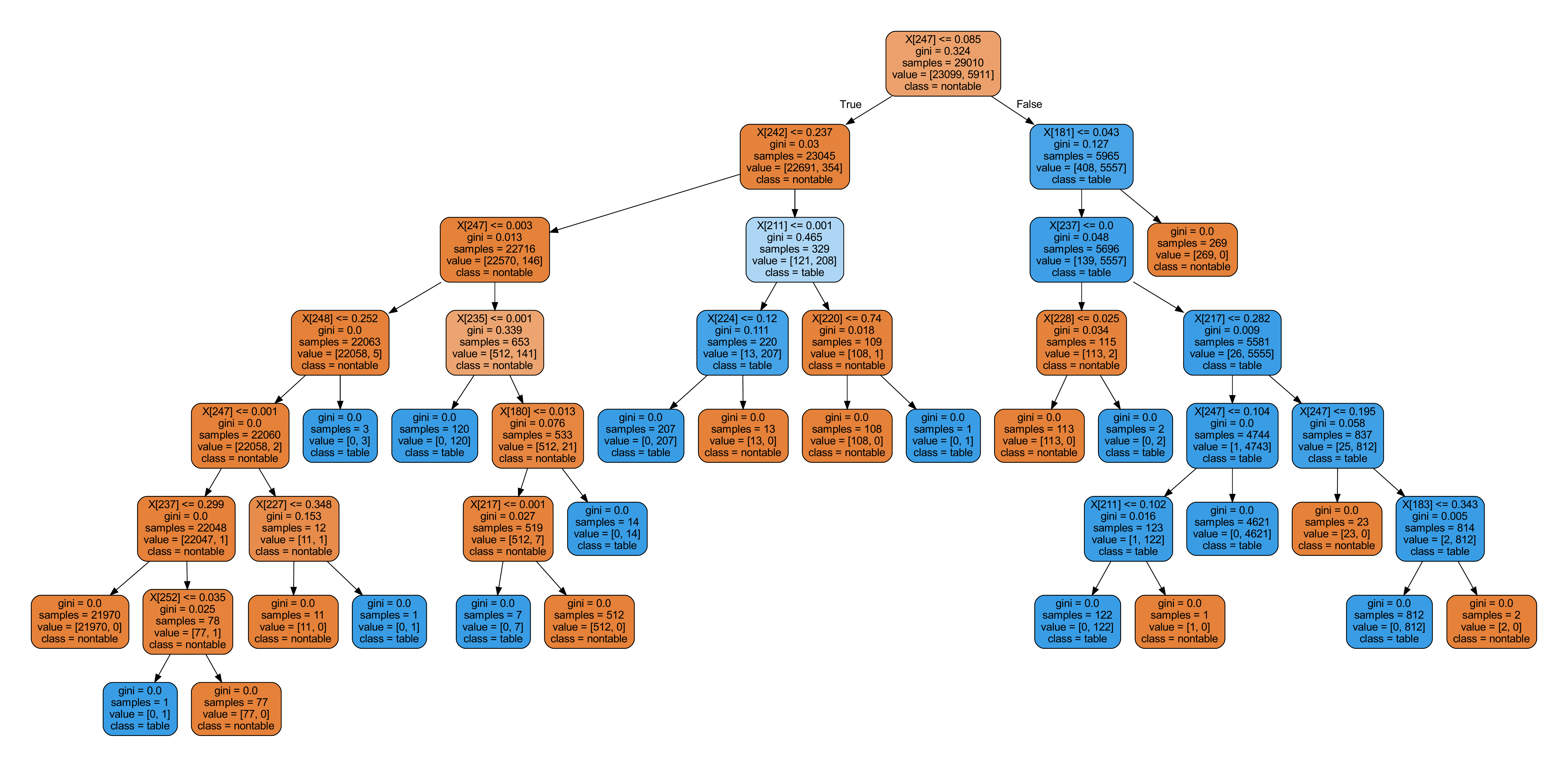}}
    \caption{Decision tree for 10 variables}
    \label{fig:tree4}
\end{figure}

\begin{figure}
    \includegraphics[scale=0.3]{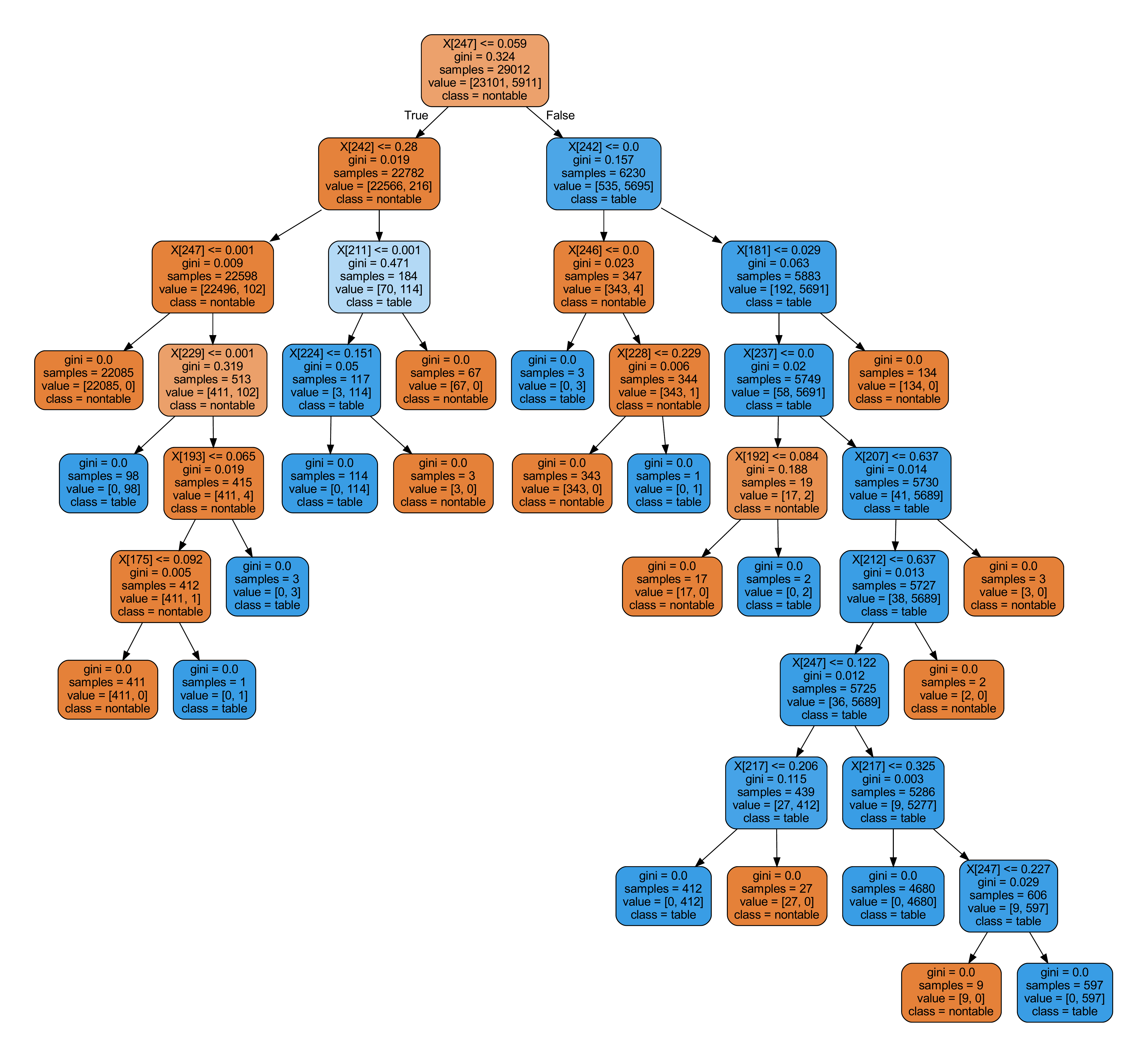}
    \caption{Decision tree for 10 variables}
    \label{fig:tree6}
\end{figure}

\end{document}